\newtheorem{theorem}{Theorem}[section]
\newtheorem{lemma}[theorem]{Lemma}
\newtheorem{proposition}[theorem]{Proposition}
\theoremstyle{definition}
\newtheorem{definition}[theorem]{Definition}
\theoremstyle{remark}
\newtheorem{remark}[theorem]{Remark}
\numberwithin{equation}{section}
\newcommand{\g}{\mathfrak{g}}
\newcommand{\ba}{\overline{\alpha}}
\begin{document}

\title[  ]
{On $n$-Hom-Leibniz algebras and cohomology}%
\author{Abdenacer  Makhlouf and Anita Naolekar }%
\address{A.M.  Universit\'{e} de Haute Alsace,  IRIMAS - D\'epartement de  Math\'{e}matiques,
6  bis, rue des Fr\`{e}res Lumi\`{e}re F-68093 Mulhouse, France.}

\address{A.N. Indian Statistical Institute, 8th Mile, Mysore Road,
Bangalore, 560059,
India.}%\\

 \email{Abdenacer.Makhlouf@uha.fr, anita@isibang.ac.in}

\thanks {}

 \subjclass[2000]{}
\keywords{
 $n$-Hom-Leibniz algebra, cohomology, deformation}
\date{}
%
%\dedicatory{}%
%\commby{}%
% ----------------------------------------------------------------
\begin{abstract}
The purpose of this paper is to provide a cohomology of $n$-Hom-Leibniz algebras. Moreover, we study some higher operations on cohomology spaces and deformations.

\end{abstract}
\maketitle
% ----------------------------------------------------------------

\section*{Introduction}
The first appearance of ternary operation goes back to 19th century,
where Cayley considered cubic matrices. The first motivation to study $n$-ary algebras appeared in Physics when Nambu suggested in 1973 a generalization of Hamiltonian Mechanics with  more than one hamiltonian \cite{Nambu}. The algebraic formulation of Nambu Mechanics is due to Takhtajan \cite{Takhtajan} and the abstract definition of $n$-Lie algebra is due to Filippov in 1985, see \cite{Filippov}. Moreover, 3-Lie algebras appeared in String Theory. Fuzzy sphere
(noncommutative space) arises naturally in the description of D1-branes ending on D3-
branes in Type IIB superstring theory and the effective dynamics of this system is described
by the Nahm equations. In \cite{Basu}, Basu and Harvey suggested to replace the Lie algebra appearing in the
Nahm equation by a 3-Lie algebra for the lifted Nahm equations. Furthermore, in the context
of Bagger-Lambert-Gustavsson model of multiple M2-branes, Bagger-Lambert managed
to construct, using a ternary bracket, an $N = 2$ supersymmetric version of the worldvolume
theory of the M-theory membrane, see \cite{BL}. Various algebraic aspects
of 3-Lie algebras, or more generally, of $n$-Lie algebras were considered. 
Construction, realization and classifications of 3-Lie algebras and $n$-Lie algebras were studied, see for example \cite{BSZ,De Azcarraga3,LarssonTA}. In particular,
representation theory of $n$-Lie algebras was first introduced by Kasymov in \cite{Kasymov1}. 
Through fundamental objects one may also represent a 3-Lie algebra and more generally
an $n$-Lie algebra by a Leibniz algebra \cite{Daletskii,Takhtajan1}. For deformation theory and cohomologies of $n$-Lie algebras, we refer to \cite{Gautheron1,Gautheron,R,Takhtajan1}.
 The $n$-Leibniz  algebras, which are a non-skewsymmetric version of $n$-Lie algebras, were introduced and studied in \cite{CasasLodayPirashvili}. Some of their properties were studied in \cite{CILL,De Azcarraga4}.
 Hom-type algebras were motivated by $q$-deformations of algebras of vector fields, in particular Witt and Virasoro algebras. It turns out that the structure obtained when usual derivation is replaced by a $\sigma$-derivation is no longer a Lie algebra but a twisted Jacobi condition is satisfied. These type of algebras were called Hom-Lie algebras and studied in \cite{HLS}. Generalizations to $n$-ary operations and $n$-Hom-Lie algebras were considered in \cite{Ataguema}. An extensive literature deals with properties of this kind of algebras, see \cite{AEM,ams:MabroukTernary,ams:MabroukRep,CS,MS}. 

In this paper, we consider $n$-Hom-Leibniz algebras and provide a cohomology complex with values in a representation. Moreover we study their one parameter formal deformations. In the first section, we define $n$-Hom-Leibniz algebras and endow a Hom-Leibniz algebra structure on the vector space $\mathcal{D}_{n-1}(L):= L^{\otimes n-1}$. In the second section, we define  representations of a $n$-Hom-Leibniz algebra and then a cohomology  with coefficients in a representation. In Section 3, we give an interpretation of the first cohomology groups  in terms of derivations and extensions. In Section 4, we show a relationship between the cohomology of $n$-Hom-Leibniz algebra and  its associated Hom-Leibniz algebra. Section 5 is dedicated to some higher operations and to show that the cohomology space of a $n$-Hom-Leibniz algebra turns into a graded Lie algebra. In the last section, we discuss formal  deformations and their connection to cohomology.

\section{$n$-Hom-Leibniz algebras}
In this paper all vector spaces are considered over a field $\mathbb{K}$ of characteristic 0. 
\begin{definition}
A (right) Hom-Leibniz algebra, \cite{MS},  is a triple $(L, [\cdot,\cdot ],\alpha)$ where $L$ is a vector space, with a bracket $[\cdot,\cdot ]: L\otimes L\longrightarrow L$ and a linear map $\alpha: L\longrightarrow L$ satisfying
\begin{equation}\label{homleib}
[[x, y],\alpha(z)]= [[x, z], \alpha(y)] + [\alpha(x), [y,z]].
\end{equation}
\end{definition}
Rewriting the above condition, if $$\begin{array}{rl}
ad_y: L&\longrightarrow L\\
x &\longrightarrow [x, y]
\end{array},
$$ then $$ad_{\alpha(z)}[x, y]= [ad_z(x), \alpha(y)] + [\alpha(x), ad_z(y)].$$

We introduce now the definition of $n$-Hom-Leibniz algebra. 
\begin{definition}
A (right) $n$-Hom-Leibniz algebra is a vector space $L$ together with a bracket $[\cdot,\cdots,\cdot]: L^{\otimes n}\longrightarrow L$ and $n-1$ linear maps 
$\alpha_i: L\longrightarrow L$, $1\leq i\leq n-1$, satisfying \\
$[[x_1,\cdots, x_n],\alpha_1(y_1), \cdots, \alpha_{n-1}(y_{n-1})]$
\begin{equation}\label{nhomleib}
=\sum_{i=1}^n [\alpha_1(x_1), \cdots, \alpha_{i-1}(x_{i-1}), [x_i, y_1, \cdots, y_{n-1}], \alpha_i(x_{i+1}), \cdots \alpha_{n-1}(x_n)].
\end{equation}
\end{definition}

Rewriting the above condition, for $Y=(y_1, \ldots, y_{n-1}) \in L^{\otimes n-1}$, if $$\begin{array}{rl}
ad_Y: L&\longrightarrow L\\
x &\longrightarrow [x, Y]
\end{array}
$$ then 

$$\begin{array}{rl}
& ad_{\alpha_1(y_1), \cdots, \alpha_{n-1}(y_{n-1})}[x_1, \cdots, x_{n-1}, x_n]\\
&= \sum_{i=1}^n [\alpha_1(x_1), \cdots, \alpha_{i-1}(x_{i-1}),ad_Y(x_i), \alpha_i (x_{i+1}), \cdots, \alpha_{n-1}(x_n)].
\end{array}$$

For $n=2$, this gives us a Hom-Leibniz algebra.
If we take $\alpha_i= id$ for all $1\leq i\leq n-1$ we get Leibniz $n$-algebras, \cite{CasasLodayPirashvili}.

%\subsection {Examples}
%\begin{example} 
%Let $\mathcal{L}$ be a $2$-dimensional vector space over $\mathbb{C}$ with basis $\{e_1, e_2\}$ and multiplication table given by 
%\begin{align*}
%& [e_1, e_2, e_2] = -(1 + \gamma )\lambda e_1, \;
%[e_2, e_1, e_2] = \lambda e_1, \; 
%[e_2, e_2, e_1] = \gamma \lambda e_1.\\
%&  \text{and } 0 \text{ otherwise},
%\end{align*}
%and a  linear map $\alpha: \mathcal{L}\longrightarrow \mathcal{L}$ defined by
%\begin{align*}
%\alpha (e_1)= \lambda e_1,\;
%\alpha(e_2)= \nu e_1 +e_2,
%\end{align*}
%where $\gamma,\lambda, \nu$ are parameters.  $\gamma \neq -1???$\\
%\end{example}
%\begin{example} 
%Let $\mathcal{L}$ be a $3$-dimensional vector space over $\mathbb{C}$ with basis $\{e_1, e_2, e_3\}$ and multiplication table defined by $[e_1, e_2, e_3]= -e_1$, $[e_3, e_3, e_2]= e_2$ and $0$ otherwise. It has been shown in \cite{CILL} that $\mathcal{L}$ with this $3$-linear bracket defines a $3$-Leibniz algebra. It is not a $3$-Lie algebra. 
%\end{example}

\begin{definition}
Let $(A, [\cdot,\cdots,\cdot]_A, \alpha_1, \ldots \alpha_{n-1})$, $(B, [\cdot,\cdots,\cdot]_B, \beta_1, \ldots, \beta_{n-1})$ be two $n$-Hom-Leibniz algebras. A morphism $f: A\longrightarrow B$ is a linear map satisfying, for all $x_1,\cdots x_n\in A$, 
$$f([x_1, \cdots, x_n]_A)= [f(x_1), \cdots, f(x_n)]_B$$ and $f\circ \alpha_i= \beta_i\circ f$ for all $1\leq i\leq n-1.$
\end{definition}

\begin{remark}
A $n$-Hom-Leibniz algebra $(L, [\cdot,\cdots,\cdot], \alpha_1, .... \alpha_{n-1})$ will be simply denoted by $(L, [\cdot,\cdots,\cdot], \alpha)$ if $\alpha_1= \alpha_2= \cdots= \alpha_{n-1}=\alpha$.
\end{remark}

\begin{definition}
A $n$-Hom-Leibniz algebra $(L, [\cdot,\cdots,\cdot], \alpha)$ is said to be multiplicative if 
$$\alpha ([x_1, x_2, \cdots, x_n])= [\alpha (x_1), \alpha (x_2), \cdots, \alpha (x_n)].$$
\end{definition}

The following proposition allows to construct $n$-Hom-Leibniz algebra from a $n$-Leibniz algebra along a morphism.
\begin{proposition}\label{hom}
Let $A, [\cdot,...,\cdot])$ be a $n$-Leibniz algebra and $\alpha$ be an algebra morphism of $A$, i.e. $\alpha[x_1, \cdots, x_n]= [\alpha(x_1), \cdots,  \alpha(x_n)].$ Then $(A, \alpha[\cdot, \cdots, \cdot], \alpha)$ is a $n$-Hom-Leibniz algebra.
\end{proposition}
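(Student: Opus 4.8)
The plan is to verify the defining identity~\eqref{nhomleib} directly for the twisted bracket, taking all the twisting maps $\alpha_i$ equal to the single morphism $\alpha$. Write $\{x_1,\dots,x_n\} := \alpha([x_1,\dots,x_n])$ for the new bracket on $A$; I must show that $(A,\{\cdot,\dots,\cdot\},\alpha)$ satisfies
\[
\{\{x_1,\dots,x_n\},\alpha(y_1),\dots,\alpha(y_{n-1})\} = \sum_{i=1}^n \{\alpha(x_1),\dots,\{x_i,y_1,\dots,y_{n-1}\},\dots,\alpha(x_n)\}.
\]
The whole argument rests on pushing $\alpha$ through brackets via the morphism hypothesis $\alpha([z_1,\dots,z_n]) = [\alpha(z_1),\dots,\alpha(z_n)]$ and then applying the ordinary ($\alpha=\mathrm{id}$) $n$-Leibniz identity to entries of the form $\alpha(\cdot)$. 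This is an instance of the general ``Yau twisting'' principle.

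First I would expand the left-hand side. By definition of $\{\cdot\}$,
\[
\{\{x_1,\dots,x_n\},\alpha(y_1),\dots,\alpha(y_{n-1})\} = \alpha\big([\alpha([x_1,\dots,x_n]),\alpha(y_1),\dots,\alpha(y_{n-1})]\big),
\]
and the morphism property turns the inner $\alpha([x_1,\dots,x_n])$ into $[\alpha(x_1),\dots,\alpha(x_n)]$, so the bracket inside becomes $[[\alpha(x_1),\dots,\alpha(x_n)],\alpha(y_1),\dots,\alpha(y_{n-1})]$. This is now an honest expression in the original $n$-Leibniz algebra $A$ with all entries of the form $\alpha(\cdot)$, so the untwisted Leibniz identity applies to it.

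Applying that identity yields $\sum_{i=1}^n [\alpha(x_1),\dots,[\alpha(x_i),\alpha(y_1),\dots,\alpha(y_{n-1})],\dots,\alpha(x_n)]$, and the morphism property used once more collapses each inner bracket $[\alpha(x_i),\alpha(y_1),\dots,\alpha(y_{n-1})]$ to $\alpha([x_i,y_1,\dots,y_{n-1}]) = \{x_i,y_1,\dots,y_{n-1}\}$. Finally, pulling the outer $\alpha$ through the finite sum and recognizing each $\alpha([\alpha(x_1),\dots])$ as the twisted bracket $\{\alpha(x_1),\dots\}$ produces exactly the right-hand side displayed above.

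There is no real obstacle here, since the statement is a direct verification; the only thing needing care is the bookkeeping, namely keeping track of the positions of the $\alpha$'s and matching the single outermost $\alpha$ (coming from applying the definition of $\{\cdot\}$ to the outermost bracket) consistently on both sides. It is worth remarking that $(A,\{\cdot,\dots,\cdot\},\alpha)$ is automatically multiplicative, since $\alpha\{x_1,\dots,x_n\} = \alpha^2([x_1,\dots,x_n]) = \alpha([\alpha(x_1),\dots,\alpha(x_n)]) = \{\alpha(x_1),\dots,\alpha(x_n)\}$ by the morphism property, although this is not required for the stated claim.
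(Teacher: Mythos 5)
Your proof is correct and is precisely the direct verification that the paper declares ``straightforward'' and omits: expand the twisted bracket, use the morphism property $\alpha([z_1,\dots,z_n])=[\alpha(z_1),\dots,\alpha(z_n)]$ to push $\alpha$ inside, apply the untwisted $n$-Leibniz identity \eqref{nhomleib} with $\alpha_i=\mathrm{id}$ to the entries $\alpha(x_i),\alpha(y_j)$, and collapse back. The bookkeeping of the $\alpha$'s checks out, and your closing remark that the twisted structure is multiplicative is a correct (and useful) bonus beyond what the proposition asserts.
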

\begin{proof}The proof is straight forward.
\end{proof}
\begin{proposition}
If $(L, [\cdot,\cdots,\cdot], \alpha)$ is a $n$-Hom-Leibniz algebra, then $\mathcal{D}_{n-1}(L)= (L^{\otimes n-1}, \ba )$ is a Hom-Leibniz algebra with respect to the bracket
$$[a_1, \cdots, a_{n-1}, b_1, \cdots, b_{n-1}]= \sum_{i=1}^{n-1} \alpha (a_1) \otimes \cdots \otimes [a_i, b_1, \cdots, b_{n-1}]\otimes \cdots \otimes \alpha (a_{n-1})$$
and linear map $\ba : L^{\otimes n-1}\longrightarrow L^{\otimes n-1}$ defined by
$$\ba  (a_1\otimes \cdots \otimes a_{n-1})= (\alpha (a_1)\otimes \cdots\otimes \alpha (a_{n-1})).$$
Also, $\ba [X, Y]= [\ba (X), \ba (Y)]$ for all $X=(x_1, \cdots, x_{n-1})$ and $Y= (y_1, \cdots, y_{n-1})$ in $D_{n-1}(L)$.
\end{proposition}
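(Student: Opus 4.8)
The plan is to check the two assertions in turn: the Hom-Leibniz identity \eqref{homleib} for the pair $([\cdot,\cdot],\ba)$ on $L^{\otimes n-1}$, and the compatibility $\ba[X,Y]=[\ba(X),\ba(Y)]$. Throughout I will use multiplicativity of $\alpha$, that is $\alpha[a,y_1,\cdots,y_{n-1}]=[\alpha(a),\alpha(y_1),\cdots,\alpha(y_{n-1})]$; this hypothesis is in fact forced by the last assertion, since expanding $\ba[X,Y]$ slot by slot produces $\alpha[a_i,b_1,\cdots,b_{n-1}]$ in the $i$-th factor, which must equal the $[\alpha(a_i),\alpha(b_1),\cdots,\alpha(b_{n-1})]$ occurring in $[\ba(X),\ba(Y)]$. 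Write $X=(a_1,\cdots,a_{n-1})$, $Y=(b_1,\cdots,b_{n-1})$, $Z=(c_1,\cdots,c_{n-1})$.

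For \eqref{homleib} I would expand the left-hand side $[[X,Y],\ba(Z)]$ as a double sum indexed by a pair $(i,j)$, where $i$ is the slot in which the inner $Y$-bracket (from $[X,Y]$) acts and $j$ is the slot in which the outer $\ba(Z)$-bracket acts; each tensor factor carrying neither bracket picks up one $\alpha$ at each stage, hence $\alpha^2$ in total. I then separate the terms with $i=j$ from those with $i\neq j$.

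On the diagonal $i=j$ the $i$-th factor is $[[a_i,b_1,\cdots,b_{n-1}],\alpha(c_1),\cdots,\alpha(c_{n-1})]$, to which I apply the defining identity \eqref{nhomleib} with $(x_1,\cdots,x_n)=(a_i,b_1,\cdots,b_{n-1})$ and $(y_1,\cdots,y_{n-1})=(c_1,\cdots,c_{n-1})$. This splits that factor into $[[a_i,c_1,\cdots,c_{n-1}],\alpha(b_1),\cdots,\alpha(b_{n-1})]$ together with a sum over $k$ of terms $[\alpha(a_i),\alpha(b_1),\cdots,[b_k,c_1,\cdots,c_{n-1}],\cdots,\alpha(b_{n-1})]$. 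Summing over $i$, the first pieces reassemble precisely into the diagonal part of $[[X,Z],\ba(Y)]$, while the terms indexed by $(i,k)$ reassemble into the whole of $[\ba(X),[Y,Z]]$ (here $i$ is the slot where $\ba(X)$ is acted upon and $k$ the slot of $[Y,Z]$); note that each such term modifies exactly one tensor factor, matching the shape of $[\ba(X),[Y,Z]]$.

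For the off-diagonal part $i\neq j$, the $i$-th factor is $\alpha[a_i,b_1,\cdots,b_{n-1}]$ and the $j$-th is $[\alpha(a_j),\alpha(c_1),\cdots,\alpha(c_{n-1})]$. Rewriting $\alpha[a_i,b_1,\cdots,b_{n-1}]=[\alpha(a_i),\alpha(b_1),\cdots,\alpha(b_{n-1})]$ by multiplicativity turns the $(i,j)$ term into exactly the $(j,i)$ off-diagonal term of $[[X,Z],\ba(Y)]$, and since $(i,j)\mapsto(j,i)$ is a bijection of the off-diagonal index set these collections coincide. Adding the diagonal and off-diagonal contributions gives $[[X,Y],\ba(Z)]=[[X,Z],\ba(Y)]+[\ba(X),[Y,Z]]$, i.e. \eqref{homleib} on $\mathcal{D}_{n-1}(L)$; the relation $\ba[X,Y]=[\ba(X),\ba(Y)]$ then follows by applying $\ba$ factorwise and using multiplicativity once more in the bracket slot. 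The only real difficulty is organizational: tracking the double-indexed expansion and recognizing that the diagonal terms (through \eqref{nhomleib}) are exactly what produces $[\ba(X),[Y,Z]]$, while the off-diagonal terms pair off with $[[X,Z],\ba(Y)]$ under multiplicativity.
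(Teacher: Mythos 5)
Your proof is correct and follows essentially the same route as the paper's: expand $[[X,Y],\ba(Z)]$ as a double sum over tensor slots, apply the $n$-Hom-Leibniz identity \eqref{nhomleib} to the diagonal terms (which reassemble into the diagonal part of $[[X,Z],\ba(Y)]$ together with all of $[\ba(X),[Y,Z]]$), and match the off-diagonal terms pairwise. Your explicit flagging of the multiplicativity hypothesis $\alpha[a,b_1,\cdots,b_{n-1}]=[\alpha(a),\alpha(b_1),\cdots,\alpha(b_{n-1})]$ is in fact a gain in precision over the paper, which uses it silently by writing $\alpha[x_i,Z]$ and $[\alpha(x_j),\alpha(Z)]$ interchangeably when identifying the cross terms of \eqref{first} and \eqref{sec}, and which likewise needs it for the final assertion $\ba[X,Y]=[\ba(X),\ba(Y)]$.
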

{\bf Proof.} We shall check that $$[[X, Y], \alpha(Z)]= [[X, Z], \alpha(Y)] + [\alpha(X), [Y, Z]]. $$
Now, 
\begin{equation}\label{first}
\begin{array}{ll}
[[X, Y], \alpha(Z)]&=[\sum_{i=1}^{n-1} \alpha(x_1) \otimes \ldots \otimes \alpha(x_{i-1}) \otimes [x_i, y_1, \ldots, y_{n-1}]\otimes \ldots \otimes \alpha(x_{n-1}),\alpha(Z)]\\
&=\sum_{j=1}^{i-1}\sum_{i=1}^{n-1} \alpha^2(x_1) \otimes \ldots \otimes \alpha^2(x_{j-1})\otimes [\alpha(x_j), \alpha(Z)]\otimes \alpha^2 x_{j+1}\otimes \ldots \otimes \\
&\hspace{2cm}\alpha[x_i, y_1, \ldots, y_{n-1} ] \otimes \ldots \otimes \alpha^2(x_{n-1})\\
&+ \sum_{j=i+1}^{n-1}\sum_{i=1}^{n-1}\alpha^2(x_1) \otimes \ldots \otimes \alpha^2(x_{i-1})\otimes \alpha[x_i, y_1, \ldots, y_{n-1}]\otimes \alpha^2(x_{i+1})\otimes \ldots\\
& \hspace{2cm} \alpha^2 x_{j+1} \otimes [\alpha(x_j),\alpha(Z)]\otimes \ldots \otimes \alpha^2(x_{n-1})\\
&+ \sum_{i=1}^{n-1} \alpha^2(x_1) \otimes \ldots \otimes \alpha^2(x_{i-1})\otimes [[x_i, y_1, \ldots, y_{n-1}], \alpha(Z)]\otimes\\
& \hspace{2cm} \alpha^2(x_{i+1})\otimes \ldots \otimes \alpha^2(x_{n-1}).
\end{array}
\end{equation}
And
\begin{equation}\label{sec}
\begin{array}{ll}
&[[X, Z], \alpha(Y)]+ [\alpha(X), [Y, Z]]\\
&=\sum_{i=1}^{n-1}\alpha(x_1)\otimes \ldots \otimes \alpha(x_{i-1}) \otimes [x_i, Z]\otimes \alpha(x_{i+1})\otimes \ldots \otimes \alpha(x_{n-1}), \alpha(Y)]\\
&+[\alpha(X), \sum_{i=1}^{n-1} \alpha(y_1)\otimes \ldots \otimes \alpha(y_{i-1})\otimes [y_i, Z] \otimes \alpha(y_{i+1}) \otimes \ldots \otimes \alpha(y_{n-1})]\\
&= \sum_{j=1}^{i-1}\sum_{i=1}^{n-1} \alpha^2(x_1)\otimes \ldots \otimes \alpha^2(x_{j-1})\otimes [\alpha(x_j), \alpha(Y)]\otimes \ldots \otimes \alpha[x_i, Z]\\
&\hspace{2cm} \otimes \alpha^2(x_{i+1})\otimes \ldots \otimes \alpha^2(x_{n-1})\\
&+ \sum_{j=i+1}^{n-1}\sum_{i=1}^{n-1} \alpha^2(x_1)\otimes \ldots \otimes \alpha^2(x_{i-1})\otimes \alpha[x_i, Z]\otimes \ldots \otimes \alpha[x_j, Y]\\
&\hspace{2cm} \otimes \alpha^2(x_{j+1})\otimes \ldots \otimes \alpha^2(x_{n-1})\\
&+\sum_{i=1}^{n-1} \alpha^2(x_1)\otimes \ldots \otimes \alpha^2(x_{i-1}) \otimes [[x_i, Z], \alpha(Y)] \otimes \ldots \otimes \alpha^2(x_{n-1})\\
&+ \sum_{j=1}^{n-1} \sum_{i=1}^{n-1} \alpha^2(x_1) \otimes \ldots \otimes \alpha^2(x_{j-1}) \otimes [ \alpha(x_j), \alpha(y_1) \otimes \ldots \otimes \alpha(y_{i-1})\otimes\\
&\hspace{1cm}  [y_i, Z] \otimes \alpha(y_{i+1})\otimes \ldots \otimes \alpha(y_{n-1})] \otimes \alpha^2(x_j) \otimes \ldots \otimes \alpha^2(x_{n-1}).
\end{array}
\end{equation}
The first term in (\ref{first}) is same as  the second term in (\ref{sec}), the second term in (\ref{first}) is same as the first term in (\ref{sec}). Using the defining relation of a $n$-Hom-Leibniz algebra, the third term of (\ref{first}) is precisely the sum of last two terms of (\ref{sec}).
This shows that the bracket defined on $D_{n-1}(L)$ endows it with a Hom-Leibniz algebra structure.  It is straight forward to see that $\ba [X, Y]= [\ba (X), \ba (Y)]$. \qed

\section{Cohomology of $n$-Hom-Leibniz algebras}
 In this section, we define a representation of a $n$-Hom-Leibniz algebra and then  a cohomology for $n$-Hom-Leibniz algebras, with coefficients in a representation. We recall first the definition of a representation of a Hom-Leibniz algebra according to \cite{CS}.
\begin{definition}
 A representation of a Hom-Leibniz algebra $(L, [\cdot,\cdot ], \alpha)$ is a pair  $(V, \alpha_V)$ of vector space $V$ and a linear map $\alpha_V:V\rightarrow V$, equipped with $2$ actions 
$$\begin{array}{rcl}
\mu_l:&  L \otimes V &\longrightarrow V\\
\mu_r:&  V\otimes L &\longrightarrow V
\end{array}$$
satisfying for all $x, y \in L$ and $v \in V$
$$\begin{array}{ll}
\mu_l(\alpha(x), \alpha_V(v))&= \alpha_V(\mu_l(x, v))\\
\mu_r(\alpha_V(v), \alpha(x))&= \alpha_V(\mu_r(v, x))
\end{array}$$
and
$$\begin{array}{rcl}
\mu_l([x, y], \alpha_V(v))&=& \mu_r(\mu_l(x, v), \alpha(y) + \mu_r(\alpha(x), \mu_l(y, v))\\
\mu_r(\alpha_V(v), [x, y])&=& \mu_r(\mu_r(v, x), \alpha(y) - \mu_r(\mu_r(v, y), \alpha(x))\\
\mu_l(\alpha(x), \mu_r(v, y))&=& \mu_r(\mu_l(x, v) \alpha(y))- \mu_l([x, y], \alpha_V(v)).
\end{array}
$$
\end{definition}

\begin{definition}\label{repdef}
A representation of a multiplicative $n$-Hom-Leibniz algebra $(L, [\cdot,\cdots, \cdot], \alpha)$ is a pair $(M, \alpha_M)$, where $M$ is a vector space and $\alpha_M:M\rightarrow M$ is a linear map, equipped with $n$ actions \\
$$[\cdots, \cdot, \cdots]_i: L^{\otimes i} \otimes M \otimes L^{\otimes n-1-i} \longrightarrow M,~~ 0\leq i \leq n-1,$$
satisfying $(2n-1)$ equations which are obtained from 
\begin{equation}\label{rep}
[[x_1,\cdots, x_n],\alpha(y_1), \cdots, \alpha(y_{n-1})]\\=
\sum_{i=1}^n [\alpha(x_1), \cdots, \alpha(x_{i-1}), [x_i, y_1, \cdots, y_{n-1}], \alpha(x_{i+1}), \cdots \alpha(x_n)],\end{equation} 
 by letting exactly one of the variables $x_1, x_2, \ldots, x_n, y_1, \cdots, y_{n-1}$ be in $M$ (hence the corresponding $\alpha$ should be replaced by $\alpha_M$) and all others in $L$. 
 
Note that the $n$-Hom-Leibniz algebra $(L, [\cdot,\cdots,\cdot], \alpha)$ is a representation of itself.

For $n=2$ we get back the definition of representation for a Hom-Leibniz algebra.

\end{definition}

%{\it Comments: It is natural to believe that if $M$ is a representation of a $n$-Hom-Leibniz algebra $L$, then $Hom(L,M)$ will be a representation of the Hom-Leibniz algebra $\mathcal{D}_{n-1}(L)$. If this would be true, then we have defined the %cohomology of $L$ with coefficients in $M$ as the Hom-Leibniz cohomology of $\mathcal{D}_{n-1}(L)$ with coefficients in $Hom(L,M)$. I am struggling to define the proper actions to establish this claim. In case this is not true, then we may forcibly %define the cochains as:}

\begin{definition}
Let $L= (L, [\cdot, \cdots, \cdot], \alpha)$ be a multiplicative $n$-Hom-Leibniz algebra. We define the $p$-cochains $C^p(L, L)$ of $L$ with coefficients in $L$, $p\geq 1$, as linear maps 
$$f:L \otimes (\mathcal{D}_{n-1}(L))^{\otimes p-1} \longrightarrow L$$ such that 
$\alpha\circ f= f \circ (\alpha \otimes\ba ^{\otimes p-1})$. We define the coboundary map $\delta^p$ from $p$-cochains to $(p+1)$-cochains, for $X_i \in \mathcal{D}_{n-1}(L)$, $1\leq i\leq p$ and $z\in L$, as\\
$$\begin{array}{lll}
&\delta^p(f)(z, X_1, \ldots, X_p)\\
& = \sum_{1\leq i < j}^p (-1)^j f(\alpha(z), \ba (X_1), \ldots, \ldots, \ba (X_{i-1}), [X_i, X_j], \ldots, \hat{X_j}, \ldots, \ba (X_p))\\
&+ \sum_{i=1}^p (-1)^{i} f([z, X_i], \ba (X_1), \ldots, \hat{X_i}, \ldots, \ba (X_p))\\
&+\sum_{i=1}^p (-1)^{i+1} [f(z, X_1, \ldots, \hat{X_i}, \ldots, X_p), \ba ^{p-1}(X_i)]\\
&+ (X_1._\alpha f(~, X_2, \ldots, X_{p} )) .\alpha^{p-1}(z),
\end{array}$$
where $$\begin{array}{rl}& (X_1._\alpha f(~,X_2, \ldots, X_{p} )) .\alpha^{p-1}(z)\\
&= \sum_{i=1}^{n-1}[\alpha^{p-1}(z), \alpha^{p-1}(X_1^1), \ldots, \alpha^{p-1}(X_1^{i-1}), f(X_1^i, X_2, \ldots, X_{p}), \alpha^{p-1}(X_1^{i+1}),\ldots, \alpha^{p-1}(X_1^{n-1})],
\end{array}$$
where $X_i=(X_i^j)_{1\leq j\leq n-1}. $
\end{definition}

\begin{proposition}
The coboundary map $\delta^p$ defined above is a square zero map, i.e., $\delta^{p+1}\circ \delta^p = 0.$
\end{proposition}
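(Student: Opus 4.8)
The plan is to organize the computation around the structure already in place rather than attacking it blindly. By the preceding proposition, $\mathcal{D}_{n-1}(L)=(L^{\otimes n-1},\ba)$ is a Hom-Leibniz algebra, and $L$ is naturally a representation of it: the right adjoint action $ad_Y(z)=[z,Y]$ and the left action $Y\cdot_\alpha(-)$ appearing in the last summand of $\delta^p$ together supply the two actions of a Hom-Leibniz representation, their compatibility with $\alpha$ and $\ba$ being a consequence of multiplicativity and of $\ba[X,Y]=[\ba X,\ba Y]$. Under this viewpoint the four summands of $\delta^p$ are precisely the four structural operations of a Leibniz-type coboundary: the double sum over $i<j$ uses the internal bracket $[X_i,X_j]$ of $\mathcal{D}_{n-1}(L)$, the second sum is the action on the distinguished argument $z\in L$, and the third and fourth sums are the right and left actions of $\mathcal{D}_{n-1}(L)$ on the value $f(\cdots)\in L$. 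This dictionary is what I would use to predict and organize the cancellations.

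Concretely I would expand $\delta^{p+1}(\delta^p f)$ directly. Since $\delta$ has four types of terms, the composite splits into sixteen families, and the goal is to show they cancel in pairs or in small groups. The three ingredients doing the work are: the Hom-Leibniz identity on $\mathcal{D}_{n-1}(L)$ established in the previous proposition; the multiplicativity of $\alpha$ together with $\ba[X,Y]=[\ba X,\ba Y]$; and the $2n-1$ representation axioms of Definition \ref{repdef}. The cochain condition $\alpha\circ f=f\circ(\alpha\otimes\ba^{\otimes p-1})$ is needed throughout to commute the twisting maps past $f$, so that in every family the surviving arguments carry matching powers of $\alpha$ and $\ba$; without this normalization the terms would not be term-by-term comparable. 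Once matched, the cancellations among the first three families are the standard Leibniz-cohomology ones, reflecting the fact that the bracket-and-adjoint part is exactly the coboundary of $\mathcal{D}_{n-1}(L)$ acting on the representation $L$ (for which the square-zero property is the known Hom-Leibniz statement, cf. \cite{CS,MS}).

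The genuine obstacle is controlling the interaction of the fourth summand $(X_1\cdot_\alpha f)\cdot\alpha^{p-1}(z)$ with the remaining terms. This summand is structurally different from the others: it expands $X_1$ into its $n-1$ tensor components and inserts the value of $f$ into a single slot of an $n$-bracket, so when it meets the bracket term, the adjoint term, or a second copy of itself, the resulting expressions unwind only after invoking the full system of $2n-1$ representation equations rather than a single one. I expect this to be where the argument is most delicate, together with the purely combinatorial bookkeeping of the signs $(-1)^{j}$, $(-1)^{i}$ and $(-1)^{i+1}$ across the sixteen families and the index shifts that occur when a hatted argument is deleted; these shifts are exactly what convert the naive non-cancellation into pairwise cancellation, and verifying them carefully is the principal routine difficulty once the fourth-term identities are in hand.
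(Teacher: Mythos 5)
Your plan coincides with the paper's own proof: there too $\delta^p$ is split into the four summands $\delta_1+\delta_2+\delta_3+\delta_4$, the composite $\delta^{p+1}\delta^p$ is expanded and grouped (into five steps), and the cancellations are effected by the Hom-Leibniz identity of $\mathcal{D}_{n-1}(L)$ for $\delta_1\delta_1$ and by the $n$-Hom-Leibniz identity \eqref{nhomleib} for all the mixed families, with the interactions involving $\delta_4$ (the paper's Step V) being exactly the delicate part you single out. The one inaccuracy is your parenthetical suggestion that the first three families cancel by the known square-zero property of the Hom-Leibniz complex of $\mathcal{D}_{n-1}(L)$ cited from \cite{CS}: that shortcut is not literally available, since the distinguished first slot of a cochain lies in $L$ rather than in $\mathcal{D}_{n-1}(L)$ and $[z,X_i]$ denotes the $n$-ary bracket rather than the bracket of $\mathcal{D}_{n-1}(L)$, so the paper's Steps II--IV must invoke \eqref{nhomleib} directly --- which your direct-expansion plan, listing the representation axioms with $M=L$ among its ingredients, would in any case do.
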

{\bf Proof.} The proof will involve five steps. It will be convenient to denote the four terms of $\delta^p$ by 
$\delta^p= \delta^p_1 + \delta^p_2 + \delta^p_3 + \delta^p_4,$ or simply by $ \delta_1 + \delta_2 + \delta_3 + \delta_4.$

{\bf Step I.} Here we show that $\delta^{p+1}_1\delta^p_1=0.$
$$\begin{array}{ll}
&\delta^{p+1}_1\delta^p_1f(z, X_1, \ldots, X_{p+1})\\
&= \sum_{1\leq i<j\leq p+1} (-1)^j \delta_1^p f(\alpha(z), \ba (X_1), \ldots, [X_i, X_j], \ldots, \hat{X_j}, \ldots, \ba (X_{p+1}))\\
&= \sum_{1\leq i<k<j\leq p+1} (-1)^{j+k} f (\alpha^2(z), \ba^2(X_1), \ldots, [[X_i, X_j], \ba (X_k)], \ldots, \hat{\ba (X_k)},\ldots, \hat{X_j},\\
& \ldots, \ba^2(X_{p+1}))\\
&+\sum_{1\leq i<k<j\leq p+1} (-1)^{j+k} f(\alpha^2(z),\ba^2(X_1), \ldots, [\ba (X_i), [X_k, X_j]], \ldots, \hat{[X_k, X_j]},\ldots, \hat{X_j}, \\
&\ldots, \ba^2(X_{p+1}))\\
&+\sum_{1\leq i<k<j\leq p+1} (-1)^{j+k-1} f(\alpha^2(z),\ba^2(X_1), \ldots, [[X_i, X_k], \ba (X_j)], \ldots,\hat{X_j} ,\ldots, \hat{\ba (X_k)}, \\
&\ldots, \ba^2(X_{p+1}))
\end{array}$$
Applying Hom-Leibniz identity $\eqref{homleib}$ to $X_i, X_j, X_k \in \mathcal{D}_{n-1}$, we get $\delta^{p+1}_1\delta^p_1=0.$\\

{\bf Step II.} Here we show that $\delta^{p+1}_1\delta^p_2 +\delta^{p+1}_2\delta^p_1 +\delta^{p+1}_2\delta^p_2 =0$.
$$\begin{array}{ll}
& (\delta^{p+1}_1\delta^p_2 +\delta^{p+1}_2\delta^p_1) f(z, X_1, \ldots, X_{p+1})\\
&= \sum_{1\leq i<j\leq p+1} (-1)^{i+j} f( [\alpha(z), [X_i, X_j]], \ba^2(X_1), \ldots, \hat{[X_i, X_j]}, \ldots, \hat{X_j},\ldots, \ba^2(X_{p+1})).\\
\end{array}
$$
Also, $$\begin{array}{ll}
&\delta^{p+1}_2\delta^p_2 f(z, X_1, \ldots, X_{p+1})\\
&= \sum_{1\leq i<j\leq p+1} (-1)^{i+j}f( [[z, X_j],\ba (X_i)],\ba^2(X_1), \ldots, \hat{\ba (X_j)}, \ldots, \hat{X_j}, \ldots, \ba^2 (X_{p+1}))\\
&+ \sum_{1\leq i<j\leq p+1} (-1)^{i+j-1}f( [[z, X_i],\ba (X_j)], \ba^2(X_1), \ldots, \hat{X_i}, \ldots, \hat{\ba (X_j)}, \ba^2 (X_{p+1}))
\end{array}$$

$\delta^{p+1}_1\delta^p_2 +\delta^{p+1}_2\delta^p_1 +\delta^{p+1}_2\delta^p_2 =0$ by the $n$-Hom-Leibniz algebra identity $\eqref{nhomleib}$.

{\bf Step III.} 
In this step we show that $\delta^{p+1}_1\delta^p_3 +\delta^{p+1}_3\delta^p_1 + \delta^{p+1}_3\delta^p_3=0$.
$$\begin{array}{ll}
&\delta^{p+1}_1\delta^p_3 f(z, X_1, \ldots, X_{p+1})\\
&=\sum_{i<k<j} \{ (-1)^{j+k+1} [f( \alpha(z),\ba(X_1), \ldots, [X_i, X_j], \ldots, \hat{X_k}, \ldots, \hat{X_j}, \ldots, \ba(X_{p+1})), \ba^p(X_k)]\\
&+ (-1)^{j+i+1} [f( \alpha(z), \ba(X_1), \ldots,\hat{\ba(X_i)},\ldots, [X_k, X_j], \ldots, \hat{(X_j)}, \ldots, \ba(X_{p+1})), \ba^p(X_i)]\\
&+(-1)^{j+k-1} [f(\alpha(z), \ba(X_1), \ldots, [X_i, X_k], \ldots, \hat{(X_k)}, \ldots, \hat{\ba(X_j)}, \ldots, \ba(X_{p+1})), \ba^p(X_j)]\}\\
&+ \sum_{i<j} (-1)^{i+j+1} [f(\alpha(z),\ba(X_1), \ldots, \hat{[X_i, X_j]}, \ldots, \hat{X_j}, \ldots, \ba(X_{p+1})), \ba^p[X_i, X_j]]
\end{array}$$

$$\begin{array}{ll}
&\delta^{p+1}_3\delta^p_1 f(z, X_1, \ldots, X_{p+1})\\
&= \sum_{i<k<j} \{(-1)^{j+k}[f(\alpha(z), \ba(X_1), \ldots, [X_i, X_j], \ldots, \hat{X_k},\ldots, \hat{\ba(X_j)},\ldots, \ba(X_{p+1})), \ba^p(X_k)]\\
& (-1)^{j+k} [f(\alpha(z),\ba(X_1), \ldots, [X_i, X_k], \ldots, \hat{X_k}, \ldots, \hat{\ba(X_j)}, \ldots, \ba(X_{p+1})), \ba^p(X_j)]\\
& (-1)^{j+i} [f(\alpha(z),\ba(X_1), \ldots,\hat{X_i},\ldots, [X_k, X_j], \ldots, \hat{\ba(X_j)}, \ldots, \ba(X_{p+1}), ), \ba^p(X_i)]\}.
\end{array}$$

So, $$\begin{array}{ll}
&( \delta^{p+1}_1\delta^p_3 +\delta^{p+1}_3\delta^p_1 ) f(z, X_1, \ldots, X_{p+1})\\
&= \sum_{i<j} (-1)^{i+j+1} [f(\alpha(z), \ba(X_1), \ldots, \hat{[X_i, X_j]}, \ldots, \hat{X_j}, \ldots, \ba(X_{p+1})), \ba^p[X_i, X_j]].
\end{array} $$

Now $$\begin{array}{ll}
& \delta^{p+1}_3\delta^p_3 f(z, X_1, \ldots, X_{p+1})\\
&= \sum_{i<j}\{ (-1)^{i+j}[[f(z, X_1, \ldots, \hat{X_i}, \ldots, \hat{X_j}, \ldots, X_{p+1}), \ba^{p-1}(X_i)], \ba^p(X_j)] +\\
 &(-1)^{i+j+1}[[f(z, X_1, \ldots,  \hat{X_i}, \ldots, \hat{X_j}, \ldots, X_{p+1}), \ba^{p-1}(X_j)], \ba^p(X_i)]\}.
\end{array}
$$
Then
$\delta^{p+1}_1\delta^p_3 +\delta^{p+1}_3\delta^p_1 + \delta^{p+1}_3\delta^p_3=0$ follows from $n$-Hom-Leibniz identity $\eqref{nhomleib}$.\\

{\bf Step-IV.} To show, $\delta^{p+1}_2\delta^p_3 +\delta^{p+1}_3\delta^p_2 =0$.
$$\begin{array}{ll}
&\delta^{p+1}_2\delta^p_3 f(z, X_1, \ldots, X_{p+1})\\
&= \sum_{1\leq i<j\leq p+1}\{ (-1)^{i+j+1} [f([z, X_j],\ba(X_1), \ldots, \hat{\ba(X_i)}, \ldots, \hat{\ba(X_j)}, \ldots, \ba(X_{p+1})), \\
&\ba^p(X_i)]+ (-1)^{i+j} [f([z, X_i],\ba(X_1), \ldots, \hat{\ba(X_i)}, \ldots, \hat{\ba(X_j)}, \ldots, \ba(X_{p+1})), \ba^p(X_j)]\\
&= - \delta^{p+1}_3\delta^p_2 f(X_1, \ldots, X_{p+1},z).
\end{array}$$

{\bf Step V.} Finally, we show that $\delta^{p+1}_2\delta^p_4 + \delta^{p+1}_3\delta^p_4 + \delta^{p+1}_4\delta^p_3 + \delta^{p+1}_4\delta^p_4=0.$
$$\begin{array}{ll}
& \delta^{p+1}_1\delta^p_4 f(z, X_1, \ldots, X_{p+1})\\
&=\sum_{1\leq i<j\leq p} (-1)^j \sum_{1\leq k\leq n-1}  [\alpha^p(z), \ba^p(X_1)^{(1)}, \ldots, f( \ba(X_1)^{(k)},\ba(X_2), \ldots, [X_i, X_j], \\
&\ldots, \hat{\ba(X_j)}, \ldots, \ba(X_{p+1})), \ldots, \ba^p(X_1)^{(n-1)})] \\
&+ \sum_{1\leq j\leq p}(-1)^j \sum_{1\leq k\leq n-1}  [\alpha^p(z), \ba^{p-1}[X_1, X_j]^{(1)}, \ldots, f([X_1, X_j]^{(k)},\ba(X_2), \ldots, \\
&\hat{\ba(X_j)}, \ldots, \ba(X_{p+1}) ), \ldots, \ba^{p-1}[X_1, X_j]^{(n-1)}].
\end{array}$$

$$\begin{array}{ll}
& \delta^{p+1}_4\delta^p_1 f(z, X_1, \ldots, X_{p+1})\\
&= \sum_{1\leq k\leq n-1}  \sum_{2\leq i<j \leq p+1} (-1)^j [\alpha^p(z), \ba^p(X_1)^{(1)}, \ldots, \\
&f( \ba(X_1)^{(k)},\ba(X_2), \ldots, [X_i, X_j], \ldots, \hat{X_j},\ldots, \ba(X_{p+1})), \ldots, \ba^p(X_1)^{(n-1)}].
\end{array}$$

Note that $\delta^{p+1}_4\delta^p_1 f(z, X_1, \ldots, X_{p+1})$ is negative of the first term of $\delta^{p+1}_1\delta^p_4 f(z, X_1, \ldots, X_{p+1})$.
$$\begin{array}{ll}
&\delta^{p+1}_2\delta^p_4 f(z, X_1, \ldots, X_{p+1})\\
&= \sum_{1\leq j\leq n-1} [\ba^{p-1}[z, X_1], \ba^p(X_2)^{(1)}, \ldots, f( \ba(X_2)^{(j)},\ba(X_3), \ldots, \ba(X_{p+1})), \\
&\ldots, \ba^p(X_2)^{(n-1)}]+ \sum_{2\leq i\leq p+1} \sum_{1\leq j \leq n-1}  [\alpha^{p-1}[z, X_i], \ba^p(X_1)^{(1)}, \ldots, f( \ba(X_1)^{(j)},\ba(X_2), \ldots, \\
&\hat{\ba(X_i)}, \ldots, \ba(X_{p+1})), \ldots, \ba^p(X_1)^{(n-1)}].
\end{array}$$

$$\begin{array}{ll}
&\delta^{p+1}_4\delta^p_2 f(z, X_1, \ldots, X_{p+1})\\
&= \sum_{k=1}^{n-1} \sum_{j=2}^{p+1} (-1)^j [\alpha^p(z), \ba^p(X_1)^1, \ldots, f([X_1^k, X_j],\ba(X_2), \ldots, \hat{X_j}, \ldots, \\
&\ba(X_{p+1})), \ldots, \ba^p(X_1)^{n-1}]
\end{array}$$
 Hence, $\delta^{p+1}_4\delta^p_2= -( \delta^{p+1}_1\delta^p_4 + \delta^{p+1}_4\delta^p_1).$

$$\begin{array}{ll}
&\delta^{p+1}_4\delta^p_3 f(z, X_1, \ldots, X_{p+1})\\
&= \sum_{k=1}^{n-1}  \sum_{i=2}^{p+1} (-1)^i [\alpha^p(z), \ba^p(X_1)^{(1)}, \ldots, [f( X_1^{(k)},X_2, \ldots, \hat{X_i}, \ldots, \\
& X_{p+1}), \ba^{p-1}(X_i)], \ldots, \ba(X_1)^{(n-1)}].
\end{array}$$

$$\begin{array}{ll}
&\delta^{p+1}_3\delta^p_4 f(z, X_1, \ldots, X_{p+1})\\
&= \sum_{k=1}^{n-1} [[\alpha^{p-1}(z), \ba^{p-1}(X_2)^{(1)}, \ldots, f( X_2^{(k)}, X_3, \ldots, X_{p+1}), \ldots, \ba^{p-1}(X_2)^{(n-1)}], \ba^p(X_1)]\\
&+ \sum_{i=2}^{p+1} (-1)^{i+1} \sum_{k=1}^{n-1}(-1)^{p}[[\alpha^{p-1}(z), \ba^{p-1}(X_1)^{(1)}, \ldots, f(X_1^{(k)}, X_2, \ldots, \hat{X_i}, \ldots, X_{p+1}), \ldots,\\
& \ba^{p-1}(X_1)^{(n-1)}], \ba^p(X_i)].
\end{array}$$

$$\begin{array}{ll}
&\delta^{p+1}_4\delta^p_4 f(z, X_1, \ldots, X_{p+1})\\
&= \sum_{k=1}^{n-1} [\alpha^p(z), \ba^p(X_1)^{(1)}, \ldots, \sum_{j=1}^{n-1} (-1)^{p}[ \alpha^{p-1}(X_1)^{(k)}, \ba^{p-1}(X_2)^{(1)}, \\
&\ldots, f( X_2^{(j)},X_3, \ldots, X_{p+1}), \ldots, \ba^{p-1}(X_2)^{(n-1)}], \ldots, \ba^p(X_1)^{(n-1)}].
\end{array}$$

By the $n$-Hom-Leibniz identity, it follows that  
(First part of $\delta^{p+1}_2\delta^p_4$) + (first part of $\delta^{p+1}_3\delta^p_4 ) = -\delta^{p+1}_4\delta^p_4,$ and (second part of $\delta^{p+1}_2\delta^p_4)+ \delta^{p+1}_4\delta^p_3=$--(second part of $\delta^{p+1}_3\delta^p_4$).   
Hence, $$\delta^{p+1}_2\delta^p_4 + \delta^{p+1}_3\delta^p_4 + \delta^{p+1}_4\delta^p_3 + \delta^{p+1}_4\delta^p_4=0.$$ This completes the proof of the theorem.
\qed

Therefore, we get a cohomology complex with values in the algebra.
\begin{definition}
Let the space of $p$-cocycles of the cohomology complex $C^*(L, L)$ be denoted by
$\mathcal{Z}^p(L, L)$ and the space of $p$-coboundaries of $C^*(L, L)$ be denoted by
$\mathcal{B}^p(L, L)$. We define the $p$th cohomology group of the $n$-ary multiplicative Hom-Leibniz algebra $L$, with coeffiecients in itself as the quotient group
$$H^p(L, L)= \mathcal{Z}^p(L, L)/ \mathcal{B}^p(L, L).$$
\end{definition}

\subsection{Cohomology with coefficients in a representation}
We analogously define the cohomology of a multiplicative $n$-Hom-Leibniz algebra $(L, [., \ldots, .], \alpha)$ with coefficients in a representation $(M, \alpha_M)$ as follows.

\begin{definition}\label{rep}
Let $(M, \alpha_M)$ be a representation of the multiplicative $n$-Hom-Leibniz algebra $(L, [\cdot, \cdots, \cdot], \alpha)$. We define the $p$-cochains $C^p(L, M)$ of $L$ with coefficients in $M$ as linear maps 
$$f: L\otimes \mathcal{D}_{n-1}(L)^{\otimes p-1} \longrightarrow M$$ such that 
$\alpha_M\circ f= f \circ (\alpha \otimes \ba ^{\otimes p-1})$. We define the coboundary map $\delta^p: C^p(L, M)\longrightarrow C^{p+1}(L, M)$ by
$$\begin{array}{lll}
&\delta^p(f)(z, X_1, \ldots, X_p)\\
&= \sum_{1\leq i < j}^p (-1)^j f(\alpha(z), \ba (X_1), \ldots, \ba (X_{i-1}), [X_i, X_j],  \ba (X_{i+1}), \ldots, \hat{X_j}, \ldots, \ba (X_p))\\
&+ \sum_{i=1}^p (-1)^i f([z, X_i], \ba (X_1), \ldots, \hat{X_i}, \ldots, \ba (X_p))\\
&+\sum_{i=1}^p (-1)^{i+1} [f(z, X_1, \ldots, \hat{X_i}, \ldots, X_p), \ba ^{p-1}(X_i)]_0\\
 &+ (X_1._\alpha f(~, X_2, \ldots, X_{p} )) .\alpha^{p-1}(z),
\end{array}$$
where $$\begin{array}{rl}& (X_1._\alpha f(~,X_2, \ldots, X_{p} )) .\alpha^{p-1}(z)\\
&= \sum_{i=1}^{n-1}[\alpha^{p-1}(z), \alpha^{p-1}(X_1^{1}), \ldots, f(X_1^{i}, X_2, \ldots, X_{p}), \ldots, \alpha^{p-1}(X_1^{n-1})]_i,
\end{array}$$
and $X_i=(X_i^{j})_{1\leq j\leq n-1}. $

 The last two terms in the definition of the coboundary make use of the $n$ actions $$[\cdots, \cdot, \cdots]_i: L^{\otimes i} \otimes M \otimes L^{\otimes n-1-i} \longrightarrow M,~~ 0\leq i \leq n-1.$$

\end{definition}

\begin{proposition}
The map $\delta^p$ defined in Definition  $\ref{rep}$ is a square zero map. 
\end{proposition}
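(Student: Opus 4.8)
The plan is to mirror, term by term, the five-step computation that establishes $\delta^{p+1}\circ\delta^p=0$ for the complex $C^*(L,L)$, since the coboundary here has exactly the same shape $\delta^p=\delta_1+\delta_2+\delta_3+\delta_4$. The only structural differences are that $f$ now takes values in $M$, that the third summand pairs $f$ with $\ba^{p-1}(X_i)$ through the action $[\,\cdot\,,\cdots]_0$, and that the fourth summand inserts the output of $f$ into the $n$ distinct actions $[\cdots,\cdot,\cdots]_i$, $0\le i\le n-1$. Accordingly I would expand $\delta^{p+1}\delta^p f(z,X_1,\ldots,X_{p+1})$ and sort the $16$ resulting compositions into the same groups as before:
\[
\delta_1\delta_1;\qquad \delta_1\delta_2+\delta_2\delta_1+\delta_2\delta_2;\qquad \delta_1\delta_3+\delta_3\delta_1+\delta_3\delta_3;\qquad \delta_2\delta_3+\delta_3\delta_2;
\]
and the final block gathering every composition that involves $\delta_4$, namely $\delta_1\delta_4+\delta_4\delta_1+\delta_4\delta_2+\delta_2\delta_4+\delta_3\delta_4+\delta_4\delta_3+\delta_4\delta_4$.

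First I would dispose of the groups that are insensitive to the coefficients. In Steps I and II every cancellation takes place \emph{inside} the arguments of $f$: the brackets $[X_i,X_j]$ and $[z,X_i]$ are formed in $\mathcal{D}_{n-1}(L)$ and in $L$ before $f$ is applied, so these identities are governed by the Hom-Leibniz identity \eqref{homleib} on $\mathcal{D}_{n-1}(L)$ and the $n$-Hom-Leibniz identity \eqref{nhomleib} on $L$ exactly as in the $C^*(L,L)$ case, and the computation is word-for-word the same. Likewise Step IV, the cancellation $\delta_2\delta_3+\delta_3\delta_2=0$, is a purely combinatorial matching of signs and indices that does not see whether the output lies in $L$ or in $M$, so it too carries over verbatim.

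The genuine new input appears wherever $f$ has already been evaluated and its $M$-valued output is acted upon. In Step III the surviving terms have the form $\big[[f(\ldots),\ba^{p-1}(X_i)]_0,\ba^p(X_j)\big]_0$, which must be reorganized by the instance of the $n$-Hom-Leibniz identity in which the leading variable is taken in $M$; this is precisely one of the $(2n-1)$ defining equations of the representation $(M,\alpha_M)$, so Step III closes just as before once that axiom is substituted for \eqref{nhomleib}. The heart of the argument is the final block (Step V), where $\delta_4$ spreads the output of $f$ across all $n$ actions $[\cdots,\cdot,\cdots]_i$. Here each cancellation pairs a term whose $M$-slot sits in position $i$ against the corresponding term produced by $\delta_2$, $\delta_3$, or another copy of $\delta_4$, and each such matching is exactly the instance of the representation equations obtained by letting one of $x_1,\dots,x_n,y_1,\dots,y_{n-1}$ lie in $M$. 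Throughout, the cochain condition $\alpha_M\circ f=f\circ(\alpha\otimes\ba^{\otimes p-1})$ is used to move $\alpha_M$ through $f$ and to align the powers of $\alpha$, $\ba$ and $\alpha_M$ on matching terms.

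I expect Step V to be the main obstacle. In the self-coefficient case a single identity \eqref{nhomleib} handles all positions simultaneously; here one must instead track in which slot the $M$-variable sits and invoke the correct one of the $2n-1$ representation equations for each, while keeping the signs and the $\alpha/\ba/\alpha_M$ weights aligned. Once the terms are matched position by position, the relations $\delta_4\delta_2=-(\delta_1\delta_4+\delta_4\delta_1)$, together with
\[
(\text{first part of }\delta_2\delta_4)+(\text{first part of }\delta_3\delta_4)=-\,\delta_4\delta_4,
\]
\[
(\text{second part of }\delta_2\delta_4)+\delta_4\delta_3=-(\text{second part of }\delta_3\delta_4),
\]
follow exactly as in the preceding proposition, and summing the five groups yields $\delta^{p+1}\delta^p=0$.
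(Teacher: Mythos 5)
Your proposal is correct and takes essentially the same route as the paper: the paper's own proof consists of the single remark that the argument is the same as for $C^*(L,L)$ but uses the defining relations of a representation, which is precisely the term-by-term adaptation you spell out. In particular, your observation that Steps I, II and IV are coefficient-insensitive while Steps III and V require the appropriate instances of the $(2n-1)$ representation equations (with the $M$-variable in the correct slot) together with the equivariance condition $\alpha_M\circ f= f\circ(\alpha\otimes\ba^{\otimes p-1})$ is exactly what the paper's proof implicitly relies on.
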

{\bf Proof.} The proof is similar to the proof of the previous proposition, and uses the defining relations of a $n$-Hom-Leibniz algebra representation  $\eqref{repdef}$. \qed

\section{Interpretation of first and second cohomology groups}

\subsection{\bf First Cohomology group $H^1(L; M)$}

Let $\mathrm{Der}(L, M)$ denote the vector space of linear maps $f: L\longrightarrow M$ such that
$$f([x_1, \ldots, x_n])= \sum_{i=0}^{n-1}[ x_1, \ldots, x_{i-1}, f(x_i), x_{i+1},\ldots, x_{n}]_i.$$ Elements of $\mathrm{Der}(L, M)$ are called derivations of $L$ with values in the representation $M$. 
Note that $H^1(L, M)\cong \mathcal{Z}^1(L, M)$. Let $f \in \mathcal{Z}^1(L, M)$. For $z \in L$ and $X=(x_1, \ldots, x_{n-1})\in \mathcal{D}_{n-1}$, we have
$$\begin{array}{ll}
0&=\delta^1f(z, X)\\
&= - f([z, X]) + [f(z), X]_0 + \sum_{i=1}^{n-1}[z, x_1, \ldots, x_{i-1}, f(x_i), \ldots, x_{n-1}]_i.
\end{array}$$
This simply means that $f$ is a derivation of $L$ with values in $M$ if and only if $f \in \mathcal{Z}^1(L, M)$. This proves that $H^1(L, M)\cong \mathcal{D}er(L, M)$.

\subsection{\bf Second cohomology group $H^2(L, M)$.}
 An abelian extension of a $n$-Hom-Leibniz algebra is a ($\mathbb{K}$-split) exact sequence of $n$-Hom-Leibniz algebras 
\begin{equation}\label{ext}
0\longrightarrow (M, \alpha_M) \stackrel{\iota}{\longrightarrow} (K, \alpha_K) \stackrel{\pi}{\longrightarrow}(L, \alpha)\longrightarrow 0,
\end{equation} such that for $a_1, \cdots, a_n \in K$, $[a_1, \cdots, a_n]=0,$ whenever $a_i= \iota(m_i)$ and $a_j=\iota(m_j)$ for some $1\leq i, j \leq n$, $i\neq j$. Hence, the bracket on $M$ is zero. 

Given an abelian extension \eqref{ext} of $n$-Hom-Leibniz algebras, $M$ is equipped with $n$ actions 
$$[\cdot, \cdots, \cdot]_i: L^{\otimes i} \otimes M \otimes L^{\otimes n-1-i}\longrightarrow M,$$ for as follows. For $0 \leq i \leq n-1$, define 
$$[x_1, \ldots, x_i, m, x_{i+1}, \ldots, x_{n-1}]_i= [sx_1, \ldots, sx_i, \iota m, sx_{i+1}, \ldots, sx_{n-1}],$$ where the linear map $s$ is the splitting of the exact sequence 
$$0\longrightarrow (M, \alpha_M) \stackrel{\iota}{\longrightarrow} (K, \alpha_K) \stackrel{\pi}{\longrightarrow}(L, \alpha)\longrightarrow 0.$$
These $n$ actions so defined satisfies the $(2n-1)$ relations as in equation \eqref{repdef}. In other words, $(M, \alpha_M)$ inherits the structure of a representation of $(L, \alpha)$. 

Now, let us start with a $n$-Hom-Leibniz algebra $(L, \alpha)$ and let $(M, \alpha_M)$ be a representation of $(L, \alpha)$. Let 
\begin{equation}\label{K} 0\longrightarrow (M, \alpha_M) \stackrel{\iota}{\longrightarrow} (K, \alpha_K) \stackrel{\pi}{\longrightarrow}(L, \alpha)\longrightarrow 0
\end{equation} 
be an abelian extension, such that the induced structure of representation of $L$ on $M$ induced by the extension coincides with the original one. If this condition holds, we call \eqref{K} an abelian extension of $L$ by $M$. Two such extensions 
$$\begin{array}{llll}
0\longrightarrow &(M, \alpha_M) \stackrel{\iota_1}{\longrightarrow}& (K, \alpha_K) \stackrel{\pi_1}{\longrightarrow}&(L, \alpha)\longrightarrow 0\\
0\longrightarrow &(M, \alpha_M) \stackrel{\iota_2}{\longrightarrow}& (K', \alpha_{K'}) \stackrel{\pi_2}{\longrightarrow}&(L, \alpha)\longrightarrow 0
\end{array}$$
are said to be isomorphic if there exists a $n$-Hom-Leibniz algebra homomorphism $\phi: K\longrightarrow K'$ such that the following diagram commutes.
\newcommand{\Lrightarrow}{\hbox to1cm{\rightarrowfill}}
\newcommand{\Ldownarrow}{\bigg\downarrow}

\[
  \setlength{\arraycolsep}{1pt}
  \begin{array}{*{9}c}
    0 &\Lrightarrow & (M, \alpha_M) & \stackrel{\iota_1}{\Lrightarrow} & (K, \alpha_K) & \stackrel{\pi_1}{\Lrightarrow} & (L, \alpha) & \Lrightarrow & 0\\
    & & \Ldownarrow\mbox{Id}_M & & \Ldownarrow \phi & & \Ldownarrow \mbox{Id}_L& & \\
 0 &\Lrightarrow & (M, \alpha_M) & \stackrel{\iota_2}{\Lrightarrow} & (K', \alpha_{K'}) & \stackrel{\pi_2}{\Lrightarrow} & (L, \alpha) & \Lrightarrow & 0    
  \end{array}
\]

 ie. $\phi \iota_1= \iota_2$ and $\pi_2 \phi=\pi_1$.  Let $\mathrm{Ext}(L, M)$ be the set of isomorphism classes of extensions of $L$ by $M$. 

Let $f: L \otimes L^{\otimes n-1} \longrightarrow M$ be a linear map. We define a $n$-bracket on $K= M\oplus L$ by 
$$\begin{array}{ll}
&[(m_1, x_1), (m_2, x_2), \ldots, (m_n, x_n)]\\
&= (\sum_{i=1}^n[x_1, \ldots, m_i, \ldots, x_n]_i + f(x_1, \ldots, x_n), [x_1, \ldots, x_n]),
\end{array}$$ for $m_i \in M$ and $x_i \in L$ for $1\leq i \leq n$. Also, let $\alpha_K(m, x)=(\alpha_M(m), \alpha (x))$.
Note that, $K$ with the bracket induced by $f$ as above, and with $\alpha_K$ as defined, is a $n$-Hom-Leibniz algebra if and only if 
$$\begin{array}{ll}
&f([z, X], \ba(Y)) + [f(z, X), \ba(Y)]\\
&= f([z, Y], \ba(X)) + \sum_{i=1}^{n-1} f(\alpha(z), x_1, \ldots, x_{i-1},[x_i, Y], x_{i+1} \ldots, x_{n-1}) \\
&+[f(z, Y), \ba(X)] + \sum_{i=1}^{n-1} [\alpha(z), \alpha(x_1), \ldots, \alpha(x_{i-1}), f(x_i, \ba(Y)), \ldots, \alpha(x_n)], 
\end{array}$$ that is, $f$ is a $2$-cocycle. This abelian extension 
$$0 \Lrightarrow  (M, \alpha_M)  \stackrel{\iota}{\Lrightarrow}  (K, \alpha_K) \stackrel{\pi}{\Lrightarrow}  (L, \alpha) \Lrightarrow  0$$ where the $n$-Hom-Leibniz algebra bracket on $K=M\oplus L$ is induced by the $2$-cocycle $f$, is denoted by $K_f$.

\begin{lemma} With the above notation, $f$ and $g$ determine the same cohomology class in $H^2(L,M)$ if and only if $K_f$ and $K_g$ are isomorphic. 
\end{lemma}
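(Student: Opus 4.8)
The plan is to set up the standard dictionary between isomorphisms of abelian extensions (fixing $M$ and $L$) and $1$-cochains, and then to show that the requirement that such an isomorphism be a morphism of $n$-Hom-Leibniz algebras is precisely the coboundary relation $f-g=\delta^1 h$. First I would observe that any $\phi\colon K_f\to K_g$ making the displayed ladder commute must restrict to $\mathrm{Id}_M$ on $\iota(M)$ and cover $\mathrm{Id}_L$ on the quotient; writing elements of $K=M\oplus L$ as pairs, this forces $\phi(m,x)=(m+h(x),x)$ for a unique linear map $h\colon L\to M$. Conversely, for any linear $h$ this formula defines a linear bijection with inverse $(m,x)\mapsto(m-h(x),x)$, and it automatically satisfies $\phi\iota_1=\iota_2$ and $\pi_2\phi=\pi_1$. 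Thus isomorphisms of extensions are parametrized by linear maps $h$, and the task reduces to deciding for which $h$ the map $\phi$ is a morphism.

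Next I would impose the two defining conditions of a morphism on $\phi$. The twisting compatibility $\phi\circ\alpha_{K_f}=\alpha_{K_g}\circ\phi$ expands on the $M$-component to $h\circ\alpha=\alpha_M\circ h$, which is exactly the condition that $h$ lie in $C^1(L,M)$. The multiplicativity condition is where the real content sits: comparing the $M$-components of $\phi[(m_1,x_1),\ldots,(m_n,x_n)]_{K_f}$ and of $[(m_1+h(x_1),x_1),\ldots,(m_n+h(x_n),x_n)]_{K_g}$, the terms $\sum_i[x_1,\ldots,m_i,\ldots,x_n]_i$ cancel on both sides and one is left with
$$f(x_1,\ldots,x_n)+h([x_1,\ldots,x_n])=g(x_1,\ldots,x_n)+\sum_{i=1}^n[x_1,\ldots,h(x_i),\ldots,x_n]_i.$$
Reading $(x_1,\ldots,x_n)$ as $(z,X)$ with $z=x_1$ and $X=(x_2,\ldots,x_n)$, the rearranged expression $\sum_i[x_1,\ldots,h(x_i),\ldots,x_n]_i-h([x_1,\ldots,x_n])$ is exactly the formula for $\delta^1 h(z,X)$ recorded in the computation of $H^1(L,M)$ above. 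Hence multiplicativity of $\phi$ is equivalent to $f-g=\delta^1 h$.

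Finally I would assemble the two directions. If $f$ and $g$ are cohomologous, pick $h\in C^1(L,M)$ with $f-g=\delta^1 h$; then $h\circ\alpha=\alpha_M\circ h$ holds and by the above $\phi(m,x)=(m+h(x),x)$ is an isomorphism $K_f\cong K_g$ of extensions. Conversely, an isomorphism of extensions yields $h$ with $\phi(m,x)=(m+h(x),x)$, and the morphism conditions force both $h\in C^1(L,M)$ and $f-g=\delta^1 h$, so $f$ and $g$ define the same class in $H^2(L,M)$. I expect the only delicate point to be the bookkeeping of the action indices $[\cdots]_i$ when matching the multiplicativity equation against the explicit $\delta^1$ formula, together with the routine check that the $L$-components of the two brackets coincide identically (both equal $[x_1,\ldots,x_n]$, hence impose no further constraint), so that all the information is carried by the $M$-component.
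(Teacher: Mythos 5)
Your proposal is correct and follows essentially the same route as the paper: both directions hinge on the normal form $\phi(m,x)=(m+h(x),x)$ forced by commutativity of the ladder, with the morphism conditions unpacking to $h\circ\alpha=\alpha_M\circ h$ and $f-g=\delta^1 h$. The paper leaves these verifications as ``straightforward''/``routine,'' whereas you carry them out explicitly (including the cancellation of the $\sum_i[x_1,\ldots,m_i,\ldots,x_n]_i$ terms and the matching with the $\delta^1$ formula from the $H^1$ computation), which is a faithful filling-in of the same argument rather than a different one.
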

{\bf Proof.} Let $f$ and $g$ determine the same cohomology class in $H^2(L,M)$, ie. there exists a $1$-cochain, say $h$, such that $f-g= \delta^1h$. We define a bijective map $\phi: K_f \longrightarrow K_g$ by\\
$\phi(m, x)= (m+h(x), x)$, for $m\in M$ and $x\in L$. It is straight forward to verify that $\phi$ so defined is a $n$-Hom-Leibniz algebra map from $K_f$ to $K_g$, which commutes with identity on $M$ and $L$. Conversely, let $\eta: K_f\longrightarrow K_g$ be a $n$-Hom-Leibniz algebra equivalence. Since $\eta$ commutes with identities on $M$ and $L$, $\eta(m, x)= (m+ r(x), x)$, where $r$ is a map from $L$ to $M$. Using the $n$-Hom-Leibniz algebra structure on $K_f$ and $K_g$, and the fact that $\eta$ is a $n$-Hom-Leibniz algebra map, it is routine to check that $r$ is a $1$-cochain such that $f-g= \delta^1(r)$.  \qed

This defines a map 
$$\begin{array}{ll}
\Psi: & H^2(L, M)\longrightarrow Ext(L, M)~~ \mbox{by}\\
& <f> \mapsto K_f
\end{array}.$$ 
It is also straightforward to check that $\Psi$ so defined is a bijection.
Hence, $H^2(L,M) \cong Ext(L,M)$.

\section{Embedding the cochain complex $C^*(L,L)$ into $C^*(\mathcal{D}_{n-1}(L), \mathcal{D}_{n-1}(L))$}

It turns out that the space of cochains for a $n$-Hom-Leibniz algebra $(L, [\cdot, \cdots, \cdot], \alpha)$ can be embedded into the space of cochains for its associated Hom-Leibniz algebra $\mathcal{D}_{n-1}(L)$, if $\alpha$ is an injective map. At the same time, the coboundary map is also preserved by this embedding. For the sake of completeness, let us recall here the cochain complex of a Hom-Leibniz algebra $\mathfrak{g}$ with coefficients in a representation $V$.

Let $(\mathfrak{g}, [\cdot, \cdot], \alpha)$ be a Hom-Leibniz algebra and $(V,\alpha_V)$ be a representation of $(\mathfrak{g}, [\cdot, \cdot], \alpha)$. For $n \geq 1$, let  
$C^n(\mathfrak{g}, V)$ denote the space of all $f \in Hom(\mathfrak{g}^{\otimes n}, V )$ such that $f(\alpha^{\otimes n})= \alpha_V f$.
Define the coboundary operator as
$d^n: C^n(\mathfrak{g}, V) \longrightarrow C^{n+1}(\mathfrak{g}, V)$ 
by
$$\begin{array}{lll}
(d_nf)(x_1\otimes \cdots \otimes x_{n+1})&=&[\alpha^{n-1}(x_1), f(x_2 \otimes \cdots \otimes x_{n+1})]\\
&+& \sum_{i=2}^{n+1}(-1)^i [f(x_1\otimes \cdots \otimes \check{x_i}\otimes \cdots \otimes x_{n+1}), \alpha^{n-1}(x_i)]\\
&+&\sum_{1\leq i<j\leq n+1}(−1)^{j+1}f(\alpha (x_1) \otimes \cdots \otimes \alpha(x_{i-1}) \otimes [x_i, x_j ] \otimes\\ &&\alpha(x_{i+1})\otimes \cdots \otimes \check{x_j} \otimes \cdots \otimes \alpha(x_{n+1})).\end{array}$$
We refer to \ref{CS} for a proof of the fact that the above defined operator is a square zero map. The resulting cochain complex $(C^*(\mathfrak{g}, V), d)$ is called the Chevalley-Eilenberg complex for the Hom-Leibniz algebra $(\mathfrak{g}, [\cdot, \cdot], \alpha)$ with coefficients in a representation $(V, \alpha_V)$.

Let $(L, [\cdot, \cdots, \cdot], \alpha)$ be a $n$-Hom-Leibniz algebra. 
Let $\Delta: Hom(L, L)\longrightarrow Hom(\mathcal{D}_{n-1}(L), \mathcal{D}_{n-1}(L))$ be given by:
$$\Delta (f)(x_1\otimes \ldots \otimes x_{n-1})= \sum_{i=1}^{n-1}\alpha(x_1) \otimes \ldots \otimes \alpha(x_{i-1}) \otimes f(x_i)\otimes \alpha(x_{i+1})\otimes \ldots \otimes \alpha(x_{n-1}).$$
This map $\Delta$ induces a map, again denoted by $\Delta: C^p(L, L) \longrightarrow C^p(\mathcal{D}_{n-1}(L), \mathcal{D}_{n-1}(L))$ given by
$$\begin{array}{ll}
&\Delta(f)(X_1 \otimes \ldots \otimes X_p)\\
&= \sum_{i=1}^{n-1}\alpha^{p-1}(X_1^1)\otimes\ldots \otimes \alpha^{p-1}(X_1^{i-1}) \otimes f(X_1^i, X_2\otimes \ldots \otimes X_{p})\otimes \alpha^{p-1}(X_1^{i+1})\otimes
\ldots \otimes \alpha^{p-1}(X_1^{n-1}).
\end{array}$$ 

The following theorem shows that the map  $\Delta: C^p(L, L) \longrightarrow C^p(\mathcal{D}_{n-1}(L), \mathcal{D}_{n-1}(L))$ is a map of cochain complexes. 

\begin{theorem}
The map $\Delta: C^p(L, L) \longrightarrow C^p(\mathcal{D}_{n-1}(L), \mathcal{D}_{n-1}(L))$ as defined above commutes with the coboundaries of the cochain complexes. In other words, the following diagram commutes.

\[
\xymatrix{
C^p(\mathcal{D}_{n-1}(L), \mathcal{D}_{n-1}(L)) \ar[r]^{d^p}  & C^{p+1}(\mathcal{D}_{n-1}(L), \mathcal{D}_{n-1}(L)) \\
  C^p(L, L) \ar[r]^{\delta^p} \ar[u]^\Delta  & C^{p+1}(L, L) \ar[u]^\Delta
}
\] where $d^p$ is the $p$th coboundary map of the cochain complex $C^*(\mathcal{D}_{n-1}(L), \mathcal{D}_{n-1}(L))$, \cite{CS}.
\end{theorem}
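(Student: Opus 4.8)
The plan is to verify directly that $\Delta \circ \delta^p = d^p \circ \Delta$ as maps $C^p(L,L) \to C^{p+1}(\mathcal{D}_{n-1}(L), \mathcal{D}_{n-1}(L))$, by evaluating both composites on an arbitrary cochain $f$ and an arbitrary tuple $(X_0, X_1, \ldots, X_p) \in \mathcal{D}_{n-1}(L)^{\otimes p+1}$, and matching terms. First I would note that both coboundary maps are built from the same three structural ingredients — an inner bracket $[X_i, X_j]$ term, an adjoint-from-the-left term involving the distinguished slot, and an adjoint-from-the-right term $[\,\cdot\,, \bar\alpha^{\,p}(X_i)]$ — so the natural strategy is to set up a four-way (or three-way) correspondence between the summands of $d^p(\Delta f)$ and the image under $\Delta$ of the summands of $\delta^p f$. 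Because $\Delta$ itself is a sum over the $n-1$ tensor slots of the first argument $X_0 = (X_0^1, \ldots, X_0^{n-1})$, each term of $\delta^p f$ will, after applying $\Delta$, fan out into $n-1$ terms, and I would organize the comparison slot-by-slot.

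The key steps, in order, are as follows. First I would write out $d^p(\Delta f)(X_0, \ldots, X_p)$ using the recalled Chevalley–Eilenberg formula for the Hom-Leibniz algebra $\mathcal{D}_{n-1}(L)$, substituting the explicit definition of the bracket on $\mathcal{D}_{n-1}(L)$ (the sum $\sum_i \alpha(a_1)\otimes \cdots \otimes [a_i, b_1, \ldots, b_{n-1}] \otimes \cdots$) and of $\bar\alpha$ wherever a bracket or a power of $\bar\alpha$ appears. Second, I would write out $\Delta(\delta^p f)(X_0, \ldots, X_p)$ by first applying $\delta^p$ and then expanding $\Delta$ over the $n-1$ slots of the first slot of the resulting $(p+1)$-cochain. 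Third, I would match the four families of terms: the $\delta_2$-type term of $\delta^p f$ (the $[z, X_i]$ term) should correspond to the $d^p$ summand where the distinguished element is bracketed into position; the $\delta_3$-type term (the right-adjoint $[\,f(\ldots), \bar\alpha^{p-1}(X_i)]$ term) should match the $d^p$ right-adjoint term after $\Delta$ distributes $\alpha$ across the untouched slots; and the $\delta_1$-type inner-bracket terms together with the $\delta_4$-type term (the action term $(X_1 \cdot_\alpha f) \cdot \alpha^{p-1}(z)$) should jointly reproduce the $\binom{\,}{}$-indexed inner-bracket sum of $d^p$. The compatibility condition $\alpha \circ f = f \circ (\alpha \otimes \bar\alpha^{\otimes p-1})$ defining a cochain, together with multiplicativity $\bar\alpha[X,Y] = [\bar\alpha X, \bar\alpha Y]$ proved in the earlier proposition, will be needed to line up the various powers $\alpha^{p-1}$, $\alpha^p$, $\bar\alpha^{p-1}$, $\bar\alpha^p$ that $\Delta$ and the two coboundaries introduce.

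The main obstacle I anticipate is exactly this bookkeeping of powers of $\alpha$ and $\bar\alpha$ and of signs across the slot-expansion of $\Delta$. In particular, the $\delta_4$ term of $\delta^p f$ treats the first argument $X_0$ asymmetrically (it is the slot over which the representation action is taken), whereas in $d^p(\Delta f)$ the asymmetry enters through the distinguished first argument of the Hom-Leibniz cochain; reconciling these two distinct ``distinguished-slot'' conventions is where a careful index chase is unavoidable, and where the hypothesis that $\alpha$ is injective (mentioned in the surrounding text) plays its role in ensuring $\Delta$ is well behaved. Since both $\delta^p$ and $d^p$ have already been shown to be square-zero and since $\Delta$ is visibly linear, once the term-by-term matching is established on the distinguished slot it propagates over the remaining $n-2$ slots by the same computation, and the commutativity of the square follows. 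The proof is therefore a direct, if lengthy, verification, and I would present it by exhibiting the three groups of matching terms and invoking the $n$-Hom-Leibniz identity $\eqref{nhomleib}$ to close the inner-bracket group, rather than by any indirect or homological-algebra argument.
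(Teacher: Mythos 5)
Your overall strategy---expanding both $d^p(\Delta f)$ and $\Delta(\delta^p f)$ on an arbitrary tuple and matching families of terms---is exactly the paper's, but your proposed correspondence contains a concrete misassignment that would make the verification fail as planned. The $\delta_4$ action term $(X_1._\alpha f(\,\cdot\,,X_2,\ldots,X_p)).\alpha^{p-1}(z)$ does \emph{not} combine with the $\delta_1$ inner-bracket terms to reproduce the inner-bracket sum of $d^p$; it corresponds to the \emph{left-adjoint} term $[\bar{\alpha}^{p-1}(X_1),\Delta f(X_2,\ldots,X_{p+1})]$ of the Chevalley--Eilenberg differential, a term your matching scheme leaves entirely unaccounted for. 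Expanding that left-adjoint term via the bracket on $\mathcal{D}_{n-1}(L)$ places $\alpha^{p-1}(X_1^j)$ as the first argument of an $n$-bracket whose remaining entries come from the tensor slots of $\Delta f(X_2,\ldots,X_{p+1})$, and this is precisely $\Delta$ applied to the $\delta_4$ term. The correct dictionary, which the paper's computation implements, is: left-adjoint term $\leftrightarrow \delta_4$; right-adjoint terms $\leftrightarrow \delta_3$ (diagonal part only); inner brackets $[X_i,X_j]$ with $i\geq 2$ $\leftrightarrow \delta_1$; inner brackets with $i=1$ $\leftrightarrow \delta_2$ (diagonal part only).

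Relatedly, your plan omits the mechanism that actually closes the computation. After distributing $\Delta$ and the $\mathcal{D}_{n-1}(L)$-bracket, both the right-adjoint terms and the $i=1$ inner-bracket terms of $d^p(\Delta f)$ generate ``off-diagonal'' cross terms, in which the bracket lands in a tensor slot different from the one occupied by $f$; these cancel pairwise on sign grounds alone---this is the step where the paper adds \eqref{delta1} and \eqref{delta2}. No appeal to the fundamental identity \eqref{nhomleib} is needed, nor could it repair the misassigned grouping: the unmatched terms are structurally different (an $f$ inside a bracket versus a bracket inside $f$), and the commutativity of the square is a purely formal consequence of the definitions, using only multiplicativity of $\alpha$ (i.e.\ $\bar{\alpha}[X,Y]=[\bar{\alpha}X,\bar{\alpha}Y]$) and the cochain condition $\alpha\circ f= f\circ(\alpha\otimes\bar{\alpha}^{\otimes p-1})$ to align powers of $\alpha$. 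Finally, the injectivity of $\alpha$ plays no role in this theorem; it is invoked only in the subsequent lemma, to conclude that the chain map $\Delta$ is an embedding.
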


{\bf Proof.} 
\begin{equation}
\begin{array}{ll}\label{delta}
&(d^p \Delta f)(X_1, \ldots, X_{p+1})\\
&= [\ba^{p-1}(X_1), \Delta f(X_2, \ldots, X_{p+1})] + \sum_{i=2} ^{p+1} (-1)^{i} [ \Delta f (X_1, \ldots, \hat{X_i}, \ldots, X_{p+1}), \ba^{p-1}(X_i)]\\
& + \sum_{1\leq i < j\leq p+1} (-1)^{j+1} \Delta f(\ba(X_1), \ldots, \ba(X_{i-1}), [X_i, X_j], \ldots, \hat{X_j}, \ldots , \ba (X_{p+1}))\\

\end{array}
\end{equation}

The second term of equation\, \eqref{delta} is
$$\begin{array}{ll}
=& \sum_{i=2}^{p+1} (-1)^i [ \sum_{j=1}^{n-1} \alpha^{p-1}(X_1^1) \otimes \ldots \otimes \alpha^{p-1}(X_1^{j-1}) \otimes f(X_1^j, X_2, \ldots \hat{X_i}, \ldots, X_{p+1})\\
& \otimes \alpha^{p-1}(X_1^{j+1})\otimes \ldots \otimes \alpha^{p-1}(X_1^{n-1}), \ba^{p-1}(X_i)]
\end{array}
$$
The above can be further simplified as
\begin{equation}
\begin{array}{ll}\label{delta1}
=&\sum_{i=2}^{p+1} (-1)^{i} \sum_{j=1}^{n-1} \sum_{k<j} \alpha^p(X_1^1)\otimes \ldots \otimes [\alpha^{p-1} (X_1^k), \ba^{p-1}(X_i)] \otimes \ldots \otimes \\
& \alpha f(X_1^j, X_2, \ldots, \hat{X_i}, \ldots, X_{p+1})\otimes \alpha^p(X_1^{j+1})\otimes \ldots \otimes \alpha^p (X_1^{n-1})\\
&+ \sum_{i=2}^{p+1} (-1)^{i} \sum_{j=1}^{n-1} \sum_{k>j} \alpha^p(X_1^1)\otimes \ldots \otimes \alpha^p (X_1^{j-1})\otimes \alpha f( X_1^j, X_2, \ldots, \hat{X_i}, \ldots, X_{p+1})\\
&\otimes \ldots \otimes [\alpha^{p-1}(X_1^k), \ba^{p-1}(X_i)] \otimes \ldots \otimes \alpha^p(X_1^{n-1})\\
&+ \sum_{i=2}^{p+1} (-1)^{i} \sum_{j=1}^{n-1} \alpha^p(X_1^1)\otimes \ldots \otimes \alpha^p(X_1^{j-1}) \otimes [f(X_1^j, X_2, \ldots, \hat{X_i}, \ldots, X_{p+1}), \ba^{p-1}(X_i)]\\
&\otimes \alpha^p(X_1^{j+1}) \otimes \ldots \otimes \alpha^p (X_1^{n-1}).
\end{array}
\end{equation}

As $[X_1, X_j]= \sum_{k=1}^{n-1} \alpha X_1^1 \otimes \otimes \alpha X_1^{n-1} \otimes [X_1^k, X_j] \otimes \ldots \otimes \alpha X_1^{n-1}$, the third term of \eqref{delta} for $i=1$ can be simplified as
\begin{equation}
\begin{array}{ll}\label{delta2}
&\sum_{2 \leq j \leq p+1} (-1)^{j+1} \sum_{k=1}^{n-1} \sum_{m<k} \alpha^p(X_1^1) \otimes \ldots \otimes f(\alpha X_1^m, \ba X_2, \ldots \hat{X_j}, \ldots , \ba(X_{p+1}))\\
&\otimes \ldots \otimes \alpha^{p-1}[X_1^k, X_j] \otimes \ldots \otimes \alpha^p(X_1^{n-1})\\
& +\sum_{2 \leq j \leq p+1} (-1)^{j+1} \sum_{k=1}^{n-1} \sum_{m>k} \alpha^p(X_1^1) \otimes \ldots \otimes \alpha^{p-1}[X_1^k, X_j] \otimes \ldots \otimes\\
& f(\alpha(X_1^m), \ba(X_2), \ldots \hat{X_j}, \ldots , \ba(X_{p+1}))\otimes \ldots \otimes \alpha^p(X_1^{n-1})\\
& + \sum_{k=1}^{n-1} \alpha^p(X_1^1) \otimes \ldots \otimes f([X_1^k, X_j], \ba(X_2), \ldots, \hat{X_j}, \ldots, \ba(X_{p+1}))\otimes \ldots \otimes \alpha^p(X_1^{n-1}).
\end{array}
\end{equation}

Adding \eqref{delta1} and \eqref{delta2}, we get 
$$\begin{array}{ll}
& \sum_{i=2}^{p+1} (-1)^{i} \sum_{j=1}^{n-1} \alpha^p(X_1^1)\otimes \ldots \otimes \alpha^p(X_1^{j-1}) \otimes [f(X_1^j, X_2, \ldots, \hat{X_i}, \ldots, X_{p+1}), \ba^{p-1}(X_i)]\\
&\otimes \alpha^p(X_1^{j+1}) \otimes \ldots \otimes \alpha^p(X_1^{n-1})\\
& + \sum_{j=2}^{p+1} (-1)^{j+1} \sum_{k=1}^{n-1} \alpha^p(X_1^1) \otimes \ldots \otimes f([X_1^k, X_j], \ba(X_2), \ldots, \hat{X_j}, \ldots, \ba(X_{p+1}))\\
&\otimes \alpha^p(X_1^{k+1}) \otimes \ldots \otimes \alpha^p(X_1^{n-1}).
\end{array}
$$

Hence, 
\begin{equation}
\begin{array}{ll}\label{delta3}
&(d^p \Delta f)(X_1, \ldots, X_{p+1})\\
&= [\alpha^{p-1}(X_1), \sum _{i=1}^{n-1} \alpha^{p-1}(X_2^i) \otimes \ldots \otimes \alpha^{p-1}(X_2^{i-1})\otimes f(X_2^i, X_3, \ldots, X_{p+1}) \otimes \alpha^{p-1}(X_2^{i+1})\\
&  \otimes \ldots \otimes \alpha^{p-1}(X_2^{n-1})]+ \sum_{i=2}^{p+1} (-1)^{i} \sum_{j=1}^{n-1} \alpha^p(X_1^1)\otimes \ldots \otimes \alpha^p(X_1^{j-1})\otimes \\
&[f(X_1^j, X_2, \ldots, \hat{X_i}, \ldots, X_{p+1}), \ba^{p-1}(X_i)] \otimes \alpha^p(X_1^{j+1}) \otimes \ldots \otimes \alpha^p(X_1^{n-1})\\
&+\sum_{j=2}^{p+1} (-1)^{j+1} \sum_{k=1}^{n-1} \alpha^p(X_1^1) \otimes \ldots \otimes f([X_1^k, X_j], \ba(X_2), \ldots, \hat{X_j}, \ldots, \ba(X_{p+1}))\\
&\otimes \alpha^p(X_1^{k+1}) \otimes \ldots \otimes \alpha^p(X_1^{n-1})\\
&+\sum_{2 \leq i < j \leq p+1} (-1)^{j+1} \sum_{k=1}^{n-1} \alpha^p(X_1^1) \otimes \ldots \otimes \alpha^{p} (X_1^{k-1})\otimes\\
& f(\alpha(X_1^k), \ba(X_2), \ldots, [X_i, X_j], \ldots, \hat{X_j}, \ldots , \ba(X_{p+1}))\otimes \alpha^p(X_1^{k+1}) \otimes \ldots \otimes \alpha^p(X_1^{n-1}).
\end{array}
\end{equation}

On the other hand, 
\begin{equation}
\begin{array}{ll}\label{delta4}
&\Delta \delta^pf(X_1, \ldots, X_{p+1})\\
= & \sum_{i=1}^{n-1} \alpha^p(X_1^1) \otimes \ldots \otimes \alpha^p(X_1^{i-1})\otimes \delta^p f(X_1^i, X_2, \ldots, X_{p+1})\\
& \otimes \alpha^p(X_1^{i+1})\otimes \ldots \otimes \alpha^p(X_1^{n-1})\\
=& \sum_{i=1}^{n-1} \alpha^p(X_1^1)\otimes \ldots \otimes \alpha^p(X_1^{i-1})\otimes \{\sum_{2\leq j < k}^{p+1}(-1)^k f (\alpha(X_1^i), \ba(X_2), \ldots, \ba(X_{j-1}), [X_j, X_k], \ldots, \hat{X_k}, \ldots, \ba(X_{p+1}))\\
& +\sum_{j=2}^p (-1)^j f([X_1^i, X_j], \ba(X_2), \ldots, \hat{X_j},\ldots ,\ba(X_{p+1}))\\
& + \sum_{j=2}^{p+1} (-1)^{j+1} [f(X_1^i, X_2, \ldots, \hat{X_j}, \ldots, X_{p+1}), \ba^{p-1}(X_j)]\\
&+  \sum_{j=1}^{n-1} [\alpha^p(X_1^i), \alpha^{p-1}(X_2^1)\otimes \ldots \otimes f(X_2^j, X_3, \ldots, X_{p+1})\otimes \ldots \otimes \alpha^{p-1}(X_2^{n-1})]\}\\
&\otimes \alpha^p(X_1^{i+1})\otimes \ldots \otimes \alpha^p(X_1^{n-1}).
\end{array}
\end{equation}

It is easy to see that the right hand side of equation \eqref{delta3} and equation \eqref{delta4} are the same. Hence the theorem follows. \qed

The following lemma is easy to see.
\begin{lemma}
Let $(L, [\cdot, \cdots, \cdot), \alpha)$ be a $n$-Hom-Leibniz algebra. Assume that the map $\alpha:L\longrightarrow L$ is an injective map. Then the map of cochain complexes $\Delta: C^*(L, L) \longrightarrow C^*(\mathcal{D}_{n-1}(L), \mathcal{D}_{n-1}(L))$ is an embedding.
\end{lemma}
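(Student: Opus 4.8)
The plan is to invoke the preceding theorem, which already exhibits $\Delta$ as a morphism of cochain complexes; since an embedding of complexes means a degreewise injective chain map, it remains only to show that $\Delta\colon C^p(L,L)\to C^p(\mathcal{D}_{n-1}(L),\mathcal{D}_{n-1}(L))$ is injective for each $p$. First I would reduce to a purely linear-algebraic statement by freezing all slots but the first. Assume $\Delta(f)=0$. Fixing $X_2,\ldots,X_p\in\mathcal{D}_{n-1}(L)$ and setting $g(x):=f(x,X_2,\ldots,X_p)$, the map $g\colon L\to L$ is linear, and evaluating $\Delta(f)$ on a simple tensor $X_1=x_1\otimes\cdots\otimes x_{n-1}$ gives exactly
\[
\sum_{i=1}^{n-1}\beta(x_1)\otimes\cdots\otimes g(x_i)\otimes\cdots\otimes\beta(x_{n-1}),
\]
where $\beta:=\alpha^{p-1}$, which is injective because $\alpha$ is. Thus it suffices to prove the auxiliary fact: for an injective linear map $\beta\colon L\to L$, the operator $D_\beta\colon\mathrm{Hom}(L,L)\to\mathrm{Hom}(L^{\otimes n-1},L^{\otimes n-1})$ sending $g$ to the map above is injective. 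Granting this, $g=0$ for every choice of $X_2,\ldots,X_p$, and hence $f=0$.

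To establish injectivity of $D_\beta$ I would evaluate on diagonal simple tensors. Choose a basis $\{e_\lambda\}$ of $L$; since $\beta$ is injective the family $\{\beta(e_\lambda)\}$ is linearly independent, so it extends to a basis $\mathcal{B}$ of $L$. Suppose $D_\beta(g)=0$ and fix an index $\lambda_0$. Evaluating on $e_{\lambda_0}\otimes\cdots\otimes e_{\lambda_0}$ yields
\[
\sum_{i=1}^{n-1}\beta(e_{\lambda_0})\otimes\cdots\otimes g(e_{\lambda_0})\otimes\cdots\otimes\beta(e_{\lambda_0})=0,
\]
with $g(e_{\lambda_0})$ in the $i$-th slot. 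Writing $b_0:=\beta(e_{\lambda_0})\in\mathcal{B}$ and expanding $g(e_{\lambda_0})$ in $\mathcal{B}$, I would read off coefficients against the basis $\mathcal{B}^{\otimes n-1}$ of $L^{\otimes n-1}$: the component of $g(e_{\lambda_0})$ along $b_0$ contributes the single tensor $b_0^{\otimes n-1}$ with multiplicity $n-1$, while a component along any $b\neq b_0$ contributes the $n-1$ distinct tensors obtained by placing $b$ in each slot. As these are pairwise distinct elements of $\mathcal{B}^{\otimes n-1}$, all their coefficients must vanish, forcing $g(e_{\lambda_0})=0$. Since $\lambda_0$ was arbitrary, $g=0$.

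The one delicate point, which I would treat carefully, is the diagonal coefficient: a priori $g(e_{\lambda_0})$ can have a nonzero component along $\beta(e_{\lambda_0})$ itself, so the argument cannot be reduced to a crude projection that "separates $g$ from $\beta$". What rescues it is that this component appears with the integer multiplicity $n-1$ arising from the $n-1$ summands, and $n-1\neq 0$ since $\mathbb{K}$ has characteristic zero. Everything else is routine: the linearity of $\Delta$ and of $g$, the identification of $\Delta(f)(\cdot,X_2,\ldots,X_p)$ with $D_\beta(g)$, and the fact that distinct simple tensors built from a basis form a basis of $L^{\otimes n-1}$. For $n=2$ the claim is immediate, since then $\mathcal{D}_{n-1}(L)=L$ and $\Delta$ reduces to the identity on cochains.
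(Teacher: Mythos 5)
Your proposal is correct. Note that the paper itself offers no proof of this lemma at all---it is dispatched with the remark that it ``is easy to see''---so there is no argument to compare against; your write-up supplies precisely the omitted verification, and along the natural lines. The reduction is sound: the preceding theorem already makes $\Delta$ a chain map, so only degreewise injectivity is at issue, and freezing $X_2,\ldots,X_p$ legitimately reduces this to injectivity of the operator $D_\beta(g)(x_1\otimes\cdots\otimes x_{n-1})=\sum_{i=1}^{n-1}\beta(x_1)\otimes\cdots\otimes g(x_i)\otimes\cdots\otimes\beta(x_{n-1})$ with $\beta=\alpha^{p-1}$, since $g=f(\cdot,X_2,\ldots,X_p)$ is linear. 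Your diagonal-tensor evaluation is the right device, and you correctly isolate and resolve the only delicate point: the component of $g(e_{\lambda_0})$ along $b_0=\beta(e_{\lambda_0})$ survives only with the scalar multiplicity $n-1$, which is invertible because $\mathrm{char}\,\mathbb{K}=0$ (as assumed at the start of Section 1), while the off-diagonal components land on pairwise distinct basis tensors of $\mathcal{B}^{\otimes n-1}$ and hence vanish individually. Injectivity of $\alpha$ enters exactly where you invoke it, to guarantee that the vectors $\beta(e_\lambda)$ are nonzero (indeed linearly independent) and so extend to a basis. One cosmetic remark: for $p=1$ your $\beta=\alpha^{0}=\mathrm{id}$ differs from the twist by $\alpha$ appearing in the paper's first displayed formula for $\Delta$ on $\mathrm{Hom}(L,L)$ (the two displayed definitions of $\Delta$ are not consistent at $p=1$), but your argument is insensitive to which convention is taken, since either twisting map is injective; and your observation that the case $n=2$ is trivial, $\Delta$ then being the identity on cochains, is likewise correct.
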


Henceforth we shall assume that the map $\alpha: L\longrightarrow L$ is an injective map.

\section{Higher operations on the cochain complex $C^*(L, L)$}

In \cite{B}, D. Balavoine has defined a Lie algebra structure on the cochain complex of a (right) Leibniz algebra. On the same lines we define a bracket operation on the cochain complex of a Hom-Leibniz algebra, which makes this cochain complex into a Lie algebra. 

Let $(\mathfrak{g}, [\cdot, \cdot], \alpha)$ be a Hom-Leibniz algebra. We define a bracket operation on the cochain complex $(C^*(\mathfrak{g}, \mathfrak{g}), d)$ as follows.

Let $M^p(\g)= \mbox{Hom}( \g^{\otimes p+1}, \g)= C^{p+1}(\g, \g)$. Let $f\in M^p(\g)$ and $g \in M^q(\g)$. For $0\leq k \leq p$, and $\sigma$ a $(q, p-k)$ shuffle, define 
$$\circ_k^\sigma: M^p(\mathfrak{g})\otimes M^q(\mathfrak{g})\longrightarrow M^{p+q}(\mathfrak{g})$$ by
$$\begin{array}{ll}
f \circ_k ^\sigma g( x_0\otimes  x_1 \otimes \cdots \otimes x_{p+q})\\
 = f(\alpha^q x_0 \otimes \alpha^q x_1 \otimes \cdots \otimes \alpha^qx_{k-1} \otimes g(x_k \otimes x_{\sigma(k+1)}\otimes \cdots \otimes x_{\sigma(k+q)})\otimes \\
\hspace{1cm} \alpha^q x_{\sigma(k+q+1)}\otimes \cdots \otimes \alpha^q x_{\sigma(p+q)}).
\end{array}
$$ 

We define 
$$i_f(g):= \sum_{k=0}^p (-1)^{kq} \sum_{\sigma \in Sh(q, p-k)} \epsilon(\sigma) f \circ_k ^\sigma g.$$
Next define the bracket as
$$[f, g]_L= i_f(g) + (-1)^{pq+1} i_g(f).$$

Proof of the following lemma is similar to the proof in the Leibniz algebra case, which appears as Lemma\,3.2.6 in \cite{B}.

\begin{lemma}
Let $\g$ be a Hom-Leibniz algebra. The bracket $[.,.]_L$ as defined above makes the graded vector space $M^*(\g)$ into a graded Lie algebra.
\end{lemma}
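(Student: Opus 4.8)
The plan is to establish that $[\cdot,\cdot]_L$ is graded antisymmetric and satisfies the graded Jacobi identity on $M^*(\g)$, following the template of Balavoine's Lemma 3.2.6 in \cite{B} but carefully tracking the insertions of the twisting map $\alpha$. Graded antisymmetry is immediate from the definition $[f,g]_L = i_f(g) + (-1)^{pq+1} i_g(f)$, since swapping $f$ and $g$ produces $[g,f]_L = i_g(f) + (-1)^{pq+1} i_f(g)$, and combining these gives $[f,g]_L = (-1)^{pq+1}[g,f]_L$; this uses only the symmetry of the sign exponent $pq$ in $p,q$. So the substantive content is the Jacobi identity
$$(-1)^{pr}[[f,g]_L,h]_L + (-1)^{qp}[[g,h]_L,f]_L + (-1)^{rq}[[h,f]_L,g]_L = 0$$
for $f\in M^p(\g)$, $g\in M^q(\g)$, $h\in M^r(\g)$.

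First I would reduce the Jacobi identity to a statement about the pre-Lie-type (more precisely, graded right-symmetric / Leibniz-admissible) structure of the composition products $i_f(g)$. The standard mechanism is to show that the bilinear operation $i_\bullet(\bullet)$ satisfies a graded pre-Lie relation — that the associator $i_f(i_g(h)) - i_{i_f(g)}(h)$ is suitably graded-symmetric in $g$ and $h$ — from which the Jacobi identity for the antisymmetrization $[\cdot,\cdot]_L$ follows by a purely formal computation with signs. Concretely, I would expand $i_f(i_g(h))$ by substituting the definitions of $\circ_k^\sigma$, sorting the resulting terms according to whether the inner insertion $h$ lands inside the slot already occupied by $g$ (the ``nested'' terms) or in a disjoint slot (the ``sequential'' terms). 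The sequential terms are manifestly symmetric upon exchanging $g\leftrightarrow h$ together with the appropriate sign and reshuffling, and these cancel in the alternating sum; the nested terms combine into the associator, which must be shown graded-symmetric in the two lower arguments.

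The main obstacle, and the only place where the Hom-structure genuinely intervenes beyond Balavoine's original argument, is bookkeeping the powers of $\alpha$ produced by the nested compositions. When one forms $f\circ_k^\sigma(g\circ_l^\tau h)$, each variable not fed into $g$ or $h$ picks up a factor $\alpha^{q+r}$, whereas in $(f\circ_k^\sigma g)\circ_m^\rho h$ the same variable may pick up factors in a different order; I must verify that the exponents of $\alpha$ match slot-by-slot so that the two expansions actually agree as elements of $M^{p+q+r}(\g)$. Here the hypothesis that $\g$ is multiplicative (so $\alpha[\cdot,\cdot]=[\alpha\cdot,\alpha\cdot]$) and the cochain condition $f\circ\alpha^{\otimes(p+1)} = \alpha\circ f$ are exactly what guarantee that $\alpha$ can be commuted past $f$, $g$, $h$ freely and that all exponents reconcile. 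I expect the sign computation — tracking the Koszul signs $\epsilon(\sigma)$ attached to each shuffle together with the $(-1)^{kq}$ factors, and matching them against the shuffle decomposition $Sh(r,p+q-k)\circ Sh(q,p-k)$ versus $Sh(q+r, p-k)\circ Sh(r,q-l)$ — to be the most delicate and error-prone step, but it is formally identical to the ungraded Leibniz case once the $\alpha$-powers are confirmed to be inert. Having established the graded pre-Lie identity on the nested terms, the graded Jacobi identity for $[\cdot,\cdot]_L$ follows formally, completing the proof that $(M^*(\g), [\cdot,\cdot]_L)$ is a graded Lie algebra.
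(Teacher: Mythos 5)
Your proposal is correct and coincides with the paper's approach---in fact the paper supplies no written proof at all, only the remark that the argument is ``similar to the proof in the Leibniz algebra case, Lemma~3.2.6 in \cite{B}'', and your outline (graded antisymmetry directly from the definition $[f,g]_L=i_f(g)+(-1)^{pq+1}i_g(f)$; Jacobi via graded symmetry in the last two arguments of the pre-Lie associator of the composition $i_f(g)$, with shuffle/Koszul sign matching and slot-by-slot reconciliation of the $\alpha$-powers guaranteed by the cochain condition $\alpha\circ f=f\circ\alpha^{\otimes(p+1)}$) is precisely the omitted Balavoine-style argument adapted to the Hom setting. One minor inessential point: multiplicativity of the bracket of $\mathfrak{g}$ is not actually needed here, since the bracket of $\mathfrak{g}$ never enters the definition of $[\cdot,\cdot]_L$; the $\alpha$-equivariance of the cochains alone makes the exponents of $\alpha$ inert, exactly as the rest of your argument requires.
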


Next we define a Lie bracket on the cochain complex of a $n$-Hom-Leibniz algebra $L$ with coefficients in itself. We proceed as follows. Let $(L,[\cdot,\cdots,\cdot ], \alpha)$ be a $n$-Hom-Leibniz algebra. Let $\Gamma^p(L)= C^{p+1}(L, L)= \mbox{Hom}(L\otimes \mathcal{D}_{n-1}^{\otimes p}, L).$ Let $f \in \Gamma^p(L)$ and $g \in \Gamma^q(L)$. 
Let $\sigma$ be a $(q, p-k)$ shuffle. For $k=0$ define 
$$
f \circ_0^\sigma g(z, X_1, \cdots , X_{p+q})
 = f(g(z, X_{\sigma(1)}, \cdots, X_{\sigma(q)}), \ba^qX_{\sigma(q+1)}, \cdots, \ba^q X_{\sigma(p+q)}).
$$

For $1\leq k \leq p$ define
$$\begin{array}{ll}
&f \circ_k^\sigma g (z, X_1, \cdots, X_{p+q})
= \sum_{s=1}^{n-1} f(\alpha^qz, \ba^q X_1, \cdots, \ba^q X_{k-1}, \alpha^qX_k^1, \cdots, \alpha^q X_k^{s-1}, \\
& \hspace{1cm} g(X_k^s, X_{\sigma(k+1)}, \cdots, X_{\sigma(k+q)}), \alpha^q X_k^{s+1}, \cdots, \alpha^q X_k^{n-1}, \ba^q X_{\sigma(k+q+1)}, \cdots, \ba^q X_{\sigma(p+q)}).
\end{array}$$

Define
$$i_f(g)= \sum_{k=0}^p (-1)^{kq} \sum_{\sigma \in Sh(q, p-k)} \epsilon(\sigma) f \circ_k^\sigma g.$$

Finally, for $f \in \Gamma^p(L)$ and $g \in \Gamma^q(L)$. we define the bracket $[\cdot, \cdot]_\mathcal{N}$ as 
$$[f, g]_\mathcal{N}= i_f(g) + (-1)^{pq+1} i_g(f).$$

Now, we introduce a distinguised $2$-cochain $\pi$ of $L$, which satisfies the property that $[\pi, \pi]_\mathcal{N}=0$ and the coboundary operator is just bracketting with $\pi$ (with a sign), ie. $\delta(f)= -[f, \pi]_\mathcal{N}.$
\begin{proposition}\label{pipi}
Define a $2$-cochain $\pi$ as 
$\pi(z, X)= [z, X^1, \ldots, X^{n-1}]$. Then $\pi$ is a $2$-cocycle and the equation $[\pi, \pi]_\mathcal{N}=0$ reads as the fundamental identity of the $n$-Hom-Leibniz algebra \eqref{nhomleib}.
\end{proposition}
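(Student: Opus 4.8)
The plan is to prove the two assertions of Proposition~\ref{pipi} separately, since they are essentially the same computation read at two levels. First I would verify that $\pi$ is a $2$-cocycle, i.e.\ that $\delta^2\pi=0$. I would write out $\delta^2\pi(z,X_1,X_2)$ using the four terms $\delta_1,\delta_2,\delta_3,\delta_4$ of the coboundary from the definition, substituting $\pi(z,X)=[z,X^1,\dots,X^{n-1}]$ throughout. The cochain condition $\alpha\circ\pi=\pi\circ(\alpha\otimes\ba)$ that $\pi$ must satisfy is exactly the multiplicativity of $L$, so $\pi$ is genuinely a $2$-cochain; this should be recorded at the outset. Expanding $\delta^2\pi$ then produces a sum of nested brackets, and I expect these to cancel in pairs precisely by the defining identity~\eqref{nhomleib} of the $n$-Hom-Leibniz algebra, applied to the arguments coming from $X_1$ and $X_2$. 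In fact the cleaner route is to observe that cocycle-ness of $\pi$ is a formal consequence of the second assertion: once $[\pi,\pi]_{\mathcal N}=0$ is known and $\delta(f)=-[f,\pi]_{\mathcal N}$ is established, taking $f=\pi$ gives $\delta\pi=-[\pi,\pi]_{\mathcal N}=0$. So I would prove the bracket identity first and deduce the cocycle property as a corollary.

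For the main claim I would compute $[\pi,\pi]_{\mathcal N}=i_\pi(\pi)+(-1)^{2\cdot 2+1}i_\pi(\pi)=i_\pi(\pi)-i_\pi(\pi)$ naively, but this is $0$ only superficially; the point is rather to unwind $i_\pi(\pi)$ itself and show that \emph{its} vanishing is equivalent to~\eqref{nhomleib}. Here $f=g=\pi\in\Gamma^1(L)$, so $p=q=1$, and $i_\pi(\pi)=\sum_{k=0}^1(-1)^{k}\sum_{\sigma\in Sh(1,1-k)}\epsilon(\sigma)\,\pi\circ_k^\sigma\pi$. Since a $(1,0)$-shuffle is trivial, the $k=0$ term is just $\pi\circ_0\pi(z,X_1,X_2)=\pi(\pi(z,X_1),\ba X_2)=[[z,X_1],\ba(X_2)]$, which is the left-hand side of~\eqref{nhomleib} with $x_1,\dots,x_n$ read off from $z,X_1$ and the outer $\ba(X_2)$-slots playing the role of the $\alpha_i(y_i)$. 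The $k=1$ term, by the $1\le k\le p$ formula, is $\sum_{s=1}^{n-1}\pi(\alpha z,\pi(X_1^s,X_2),\dots)$ with the remaining slots of $X_1$ filled by $\alpha$, and this reproduces exactly the right-hand sum $\sum_{i=1}^{n}[\alpha(x_1),\dots,[x_i,y_1,\dots,y_{n-1}],\dots]$ of~\eqref{nhomleib}.

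Carrying this out, I would match indices carefully: the distinguished slot $z$ together with the $n-1$ components $X_1^1,\dots,X_1^{n-1}$ of $X_1$ supplies the $n$ entries $x_1,\dots,x_n$ of the outer bracket, while the components of $X_2$ supply $y_1,\dots,y_{n-1}$, and the various $\alpha$'s and $\ba$'s account for the twisting maps $\alpha_i$ in~\eqref{nhomleib} (recalling that under the Remark convention all $\alpha_i=\alpha$). The sign $(-1)^{kq}=(-1)^k$ is what turns the sum $\mathrm{(term}_0)+\mathrm{(term}_1)$ into the difference $\mathrm{LHS}-\mathrm{RHS}$ of the fundamental identity, so that $i_\pi(\pi)=0$ is literally equation~\eqref{nhomleib}, and hence $[\pi,\pi]_{\mathcal N}=0$ is equivalent to $L$ being an $n$-Hom-Leibniz algebra. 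The main obstacle I anticipate is purely bookkeeping: tracking the $n-1$ internal tensor-slot positions $X_k^s$ against the shuffle index and ensuring the powers of $\alpha$ versus $\ba$ land in the correct factors, so that the $k=1$ term reassembles into the full sum over $i=1,\dots,n$ rather than a partial one. Once the two terms are displayed side by side, matching them against~\eqref{nhomleib} is immediate, and the identity $\delta(f)=-[f,\pi]_{\mathcal N}$ (which itself would be checked by a comparable unwinding of $i_f(\pi)$ and $i_\pi(f)$) yields $\delta\pi=0$ as the promised consequence.
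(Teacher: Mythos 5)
Your overall strategy coincides with the paper's: expand $i_\pi(\pi)$ with $p=q=1$ and read off that its vanishing is precisely \eqref{nhomleib}; and your shortcut for the cocycle claim (deduce $\delta\pi=-[\pi,\pi]_{\mathcal N}=0$ from Proposition \ref{pi}, whose proof is an independent unwinding and does not presuppose the cocycle property) is legitimate, and in fact more explicit than the paper's one-line assertion. Your observation that the cochain condition $\alpha\circ\pi=\pi\circ(\alpha\otimes\ba)$ is exactly multiplicativity is also a good point the paper leaves implicit. However, there are two bookkeeping errors, one of which breaks the central matching as written. First, your opening computation $[\pi,\pi]_{\mathcal N}=i_\pi(\pi)+(-1)^{2\cdot2+1}i_\pi(\pi)$ uses the wrong degree: since $\Gamma^p(L)=C^{p+1}(L,L)$, the $2$-cochain $\pi$ lies in $\Gamma^1(L)$, so $p=q=1$, $(-1)^{pq+1}=+1$, and $[\pi,\pi]_{\mathcal N}=2\,i_\pi(\pi)$. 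The bracket is never ``superficially zero''; in characteristic $0$ the entire content is $i_\pi(\pi)=0$. You do switch to $p=q=1$ in the next sentence, but the earlier remark signals a confusion that should be excised.

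Second, and more seriously, you attach the shuffle sets to the wrong values of $k$. In $i_\pi(\pi)=\sum_{k=0}^1(-1)^k\sum_{\sigma\in Sh(1,1-k)}\epsilon(\sigma)\,\pi\circ_k^\sigma\pi$ the trivial set $Sh(1,0)$ belongs to $k=1$; for $k=0$ the sum runs over the \emph{two} elements of $Sh(1,1)$. The transposition contributes $-\pi(\pi(z,X_2),\ba(X_1))=-[[z,X_2],\ba(X_1)]$, and this is exactly the $i=1$ summand of the right-hand side of \eqref{nhomleib} (the term where the inner bracket hits $x_1=z$). Having dropped it, you compensate by asserting that the $k=1$ term ``reproduces exactly the right-hand sum $\sum_{i=1}^{n}$'' --- it cannot: it consists of only the $n-1$ summands $s=1,\dots,n-1$, i.e.\ $i=2,\dots,n$, because the inner $\pi$ is inserted only into slots of $X_1$ and never applied to $z$. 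So the identity you describe would not balance; you even anticipate the worry (that the $k=1$ term must ``reassemble into the full sum over $i=1,\dots,n$'') but resolve it in the wrong direction. The correct expansion, as in the paper, is $i_\pi(\pi)(z,X_1,X_2)=\pi(\pi(z,X_1),\ba(X_2))-\pi(\pi(z,X_2),\ba(X_1))-\sum_{s=1}^{n-1}\pi(\alpha(z),\alpha(X_1^1),\dots,\pi(X_1^s,X_2),\dots,\alpha(X_1^{n-1}))$, which is the left-hand side of \eqref{nhomleib} minus its full right-hand side. With that correction the rest of your argument goes through.
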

{\bf Proof.} The fact that $\pi$ so defined is a cocycle follows from the $n$-Hom-Leibniz identity. 
Now, $[\pi, \pi]_\mathcal{N}=2 i_\pi(\pi).$ We show that $i_\pi (\pi)=0$.
$$\begin{array}{ll}
& i_\pi (\pi)(z, X_1, X_2)\\
&= \{\sum_{\sigma \in Sh(1, 1)} \epsilon(\sigma) \pi \circ_0^\sigma \pi -\sum_{\sigma \in Sh(1, 0)}\epsilon(\sigma) \pi\circ_1 ^\sigma \pi\}(z, X_1, X_2)\\
&=\pi(\pi(z, X_1), \alpha X_2)-\pi(\pi(z, X_2),\alpha X_1) - \sum_{s=1}^{n-1} \pi(\alpha z, \alpha X_1^1, \cdots, \alpha X_1^{s-1}, \pi(X_1^s, X_2), \\
&\alpha X_1^{s+1}, \cdots, \alpha  X_1^{n-1})\\
&=0
\end{array}$$ by the $n$-Hom-Leibniz algebra identity \eqref{nhomleib}.

\begin{proposition}\label{pi}
Let $\pi$ be the $2$-cochain of the $n$-Hom-Leibniz algebra $L$ defined as in Proposition \ref{pipi}.  Let $f$ be a $p$-cochain of $L$. Then 
$$-\delta^p (f)= [f, \pi]_\mathcal{N}.$$
\end{proposition}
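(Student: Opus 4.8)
The plan is to verify the identity $-\delta^p(f) = [f,\pi]_{\mathcal{N}}$ by unwinding the definition of the Nijenhuis-type bracket $[f,\pi]_{\mathcal{N}} = i_f(\pi) + (-1)^{2p+1} i_\pi(f)$ and matching each resulting summand against the four terms $\delta^p_1, \delta^p_2, \delta^p_3, \delta^p_4$ of the coboundary operator. Since $\pi$ is a $2$-cochain (so $q=1$), the sign $(-1)^{pq+1}$ becomes $(-1)^{p+1}$, and I expect the computation to reduce to carefully bookkeeping the contributions of $i_f(\pi)$ and $i_\pi(f)$, each of which is a sum over shuffles $\sigma \in Sh(1, p-k)$ and over the insertion slots $k$.

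First I would compute $i_\pi(f)$. Here $\pi \in \Gamma^1(L)$ and $f \in \Gamma^p(L)$, so in the formula $i_\pi(f) = \sum_{k=0}^{1}(-1)^{kp}\sum_{\sigma \in Sh(p,1-k)}\epsilon(\sigma)\,\pi \circ_k^\sigma f$ only the terms $k=0$ and $k=1$ survive. For $k=0$, the operation $\pi \circ_0^\sigma f$ plugs $f$ into the first slot of $\pi$ and shuffles the remaining $X$'s into $\pi$'s second (tensor-valued) argument; unwinding $\pi(w, \ba^{\,1}X_{\sigma(\cdots)})$ should produce precisely the third term $\delta^p_3$, namely $[f(z,\dots,\hat{X_i},\dots), \ba^{p-1}(X_i)]$, with the shuffle signs reproducing the sign $(-1)^{i+1}$. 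For $k=1$, inserting $f$ into the tensor slots of $\pi$ via the $s$-sum over $1 \le s \le n-1$ should reconstruct the last term $\delta^p_4$, the action term $(X_1 \cdot_\alpha f(\cdot, X_2,\dots)) \cdot \alpha^{p-1}(z)$.

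Next I would compute $i_f(\pi)$, where now $f$ plays the role of the ``outer'' operation, so $i_f(\pi) = \sum_{k=0}^{p}(-1)^{k}\sum_{\sigma \in Sh(1,p-k)}\epsilon(\sigma)\,f \circ_k^\sigma \pi$. The $k=0$ composite $f\circ_0^\sigma \pi$ substitutes $\pi(z, X_{\sigma(1)})= [z, X_{\sigma(1)}]$ into the leading argument of $f$; summing over the single-element shuffle with its sign should yield the second term $\delta^p_2$, namely $f([z,X_i], \ba(X_1),\dots,\hat{X_i},\dots)$. For $1 \le k \le p$, the composite $f\circ_k^\sigma \pi$ evaluates $\pi$ on a pair $X_k^s, X_{\sigma(k+1)}$, i.e.\ produces a bracket $[X_k, X_{\sigma(k+1)}]$ of two elements of $\mathcal{D}_{n-1}(L)$ sitting in an argument of $f$; this is exactly the shape of the first term $\delta^p_1$, with the shuffle and $(-1)^k$ signs assembling into the $(-1)^j$ weighting and the $[X_i,X_j]$ slot. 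Throughout I would rely on the bracket formula of $\mathcal{D}_{n-1}(L)$ from the Proposition in Section~1 to identify $[X_i, X_j]$ correctly.

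The main obstacle will be the sign bookkeeping: matching the shuffle signs $\epsilon(\sigma)$ times the prefactors $(-1)^{kq}$ against the explicit signs $(-1)^j$, $(-1)^i$, and $(-1)^{i+1}$ in the four pieces of $\delta^p$, and confirming that the overall factor $(-1)^{pq+1}=(-1)^{p+1}$ in front of $i_\pi(f)$ flips signs exactly as needed so that the total equals $-\delta^p(f)$ rather than $+\delta^p(f)$. A secondary subtlety is verifying that the actions hidden inside $\pi$ (which, for coefficients in $L$, are genuine brackets) correctly reproduce the bracketing $[\,\cdot\,, \ba^{p-1}(X_i)]$ and the $\alpha$-twisted action term, including that the powers of $\alpha$ and $\ba$ generated by the $\alpha^q$-factors in $\circ_k^\sigma$ (with $q=1$) agree with the powers $\ba(X_i)$, $\ba^{p-1}(X_i)$, and $\alpha^{p-1}(z)$ appearing in $\delta^p$. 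Once these sign and $\alpha$-power matchings are settled slot-by-slot, the four identifications $i_f(\pi) \leftrightarrow \delta^p_1 + \delta^p_2$ and $i_\pi(f) \leftrightarrow \delta^p_3 + \delta^p_4$ (up to the global sign) complete the proof.
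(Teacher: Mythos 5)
Your plan is essentially the paper's own proof. The paper likewise expands $[f,\pi]_{\mathcal{N}}=i_f(\pi)+(-1)^{p}\,i_\pi(f)$ and performs exactly the matching you describe: the $k=0$ part of $i_f(\pi)$ yields the $\delta^p_2$-term $f([z,X_i],\ldots)$, the $k\geq 1$ parts (where the $s$-sum over $\pi(X_k^s,X_{k+j})$ reassembles the $\mathcal{D}_{n-1}(L)$-bracket $[X_k,X_{k+j}]$) yield the $\delta^p_1$-term, and the $k=0$, $k=1$ parts of $i_\pi(f)$ yield the $\delta^p_3$- and $\delta^p_4$-terms respectively, with the shuffle signs absorbing the alternating signs just as you anticipate.

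One bookkeeping correction, on precisely the point you flagged as the main obstacle: since $\Gamma^m(L)=C^{m+1}(L,L)$, a $p$-cochain $f$ has $\Gamma$-degree $p-1$, so in $[f,g]_{\mathcal{N}}=i_f(g)+(-1)^{p'q'+1}i_g(f)$ one must take $p'=p-1$, $q'=1$, giving the prefactor $(-1)^{p}$ (as in the paper), not $(-1)^{p+1}$; correspondingly $i_\pi(f)=\sum_{k=0}^{1}(-1)^{(p-1)k}\sum_{\sigma\in Sh(p-1,1-k)}\epsilon(\sigma)\,\pi\circ_k^\sigma f$ and $i_f(\pi)=\sum_{k=0}^{p-1}(-1)^{k}\sum_{\sigma\in Sh(1,p-k-1)}\epsilon(\sigma)\,f\circ_k^\sigma\pi$. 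Your formulas, with $Sh(p,1-k)$ and the sign $(-1)^{p+1}$, consistently treat $f$ as an element of $\Gamma^p(L)$, i.e.\ a $(p+1)$-cochain, for which the identity would read $-\delta^{p+1}(f)=[f,\pi]_{\mathcal{N}}$; carried literally against $\delta^p$ acting on a $p$-cochain, the $i_\pi(f)$ contributions would come out with the opposite sign, producing $+(\delta^p_3+\delta^p_4)$ instead of $-(\delta^p_3+\delta^p_4)$. Once the two degree conventions are reconciled (you did convert the $2$-cochain $\pi$ to $q=1$ but not $f$ to $p-1$), the slot-by-slot matching of $\alpha$-powers and shuffle signs goes through exactly as you outline, and your proof coincides with the paper's.
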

{\bf Proof.} $$[f, \pi]_\mathcal{N}= i_f(\pi) + (-1)^p i_\pi (f).$$
Now, $$\begin{array}{ll} 
&i_f(\pi)(z, X_1, \cdots, X_p)\\
&= \sum_{k=0}^{p-1} (-1)^k \sum_{\sigma \in Sh(1, p-k-1)} \epsilon(\sigma) f \circ_k ^\sigma \pi (z, X_1, \cdots, X_p)\\
&= \sum_{\sigma \in Sh(1, p-1)} \epsilon (\sigma) f(\pi(z, X_{\sigma(1)}), \ba(X_{\sigma(2)}), \cdots, \ba(X_{\sigma(p)}))\\
&+ \sum_{k=1}^{p-1} (-1)^k \sum_{\sigma \in Sh(1, p-k-1)}\epsilon(\sigma) \sum_{s=1}^{n-1} f(\alpha(z), \ba(X_1), \cdots, \ba(X_{k-1}), \alpha(X_k^1), \cdots, \alpha(X_k^{s-1}), \\& \pi(X_k^s, X_{\sigma (k+1)}),\alpha(X_k^{s+1}), \cdots, \alpha(X_k^{n-1}), \ba(X_{\sigma (k+2)}), \cdots, \ba(X_{\sigma (p)}))\\
&=\sum_{i=1}^p (-1)^{i-1} f([z, X_i], \ba(X_1), \cdots, \hat{X_i}, \cdots, \ba(X_p))\\
&+ \sum_{k=1}^{p-1} (-1)^k \sum_{j=1}^{p-k} (-1)^{j-1} \sum_{s=1}^{n-1} f(\alpha(z), \ba(X_1), \cdots, \ba(X_{k-1}), \alpha(X_k^1), \\
&\cdots, \alpha(X_k^{s-1}), \pi(X_k^s, X_{k+j}),
\alpha(X_k^{s+1}), \cdots, \alpha(X_k^{n-1}), \ba(X_{k+1}), \cdots, \hat{X_{k+j}}\cdots \ba(X_p))\\
&= \sum(-1)^{i-1} f([z, X_i], \ba(X_1), \cdots, \hat{X_i}, \cdots,\ba(X_p))\\
&+\sum_{k=1}^{p-1}\sum_{j=1}^{p-k} (-1)^{k+j-1} f(\alpha(z), \ba(X_1), \cdots, \ba(X_{k-1}), [X_k, X_{k+j}], \ba(X_{k+1}), \cdots, \hat{X_{k+j}}, \cdots, \ba(X_p)).
\end{array}$$

On the other hand,
$$\begin{array}{ll}
& i_\pi (f) (z, X_1, \cdots, X_p)\\
&= \sum_{k=0}^1(-1)^{(p-1)k} \sum_{\sigma in Sh(p-1, 1-k)} \epsilon(\sigma) \pi \circ_k^\sigma f(z, X_1, \cdots, X_p)\\
&= \sum_{\sigma \in Sh(p-1, 1)} \epsilon(\sigma) \pi \circ_0^\sigma f(z, X_1, \cdots, X_p) + (-1)^{p-1} \sum_{\sigma \in Sh(p-1, 0)} \epsilon(\sigma) \pi \circ_1^\sigma f(z, X_1, \cdots, X_p)\\
&= \sum_{1\leq i\leq p} (-1)^{p-i}[f(z, X_1, \cdots, \hat{X_i}, \cdots, X_p), \bar{\alpha}^{p-1}X_i] \\
&+ (-1)^{p-1}\sum_{s=1}^{n-1}[\alpha^{p-1}z, \alpha^{p-1}(X_1^1), \cdots, \alpha^{p-1}(X_1^{s-1}), f(X_1^s, X_2, \cdots, X_p), \alpha^{p-1}(X_1^{s+1}), \cdots, \alpha^{p-1}(X_1^{n-1})].
\end{array}$$

Hence,
$$\begin{array}{ll}
&[f, \pi]= \sum(-1)^{i-1} f([z, X_i], \ba(X_1), \cdots, \hat{X_i}, \cdots, \ba(X_p))\\
&+\sum_{1\leq i<j}^p (-1)^{j-1} f(\alpha(z), \ba(X_1), \cdots, \ba(X_{i-1}), [X_i, X_{j}], \ba(X_{i+1}), \cdots, \hat{X_{j}}, \cdots, \ba(X_p))\\
&+ (-1)^p \sum_{1\leq i\leq p} (-1)^{p-i}[f(z, X_1, \cdots, \hat{X_i}, \cdots, X_p), \ba^{p-1}(X_i)] \\
&+(-1)^p (-1)^{p-1}\sum_{s=1}^{n-1}[\alpha^{p-1}(z), \alpha^{p-1}(X_1^1), \cdots, \alpha^{p-1}(X_1^{s-1}), f(X_1^s, X_2, \cdots, X_p), \alpha^{p-1}(X_1^{s+1}), \\
&\cdots, \alpha^{p-1}(X_1^{n-1})]\\
&= - \delta^p f.
\end{array}$$

The following lemma plays the fundamental role in showing that $\Gamma^*(L)$ with the bracket $[\cdot,\cdot ]_{\mathcal{N}}$ forms a graded Lie algebra.
\begin{lemma}\label{com}
Let $L$ be a $n$-Hom-Leibniz algebra and $\mathfrak{g}$ be the Hom-Leibniz algebra $\mathcal{D}_{n-1}(L)$. Let  $M^p(\mathfrak{g})= \mbox{Hom}(\mathfrak{g}^{\otimes p+1}, \mathfrak{g})$, and let $\Gamma^p(L)= C^{p+1}(L, L)$. Then, the following diagram commutes.
\[
\xymatrix{
M^p(\mathfrak{g}) \otimes M^q(\mathfrak{g})\ar[r]^{[.,.]_L} & M^{p+q}(\mathfrak{g}) \\
  \Gamma^p(L) \otimes \Gamma^q(L) \ar[r]^{[.,.]_{\mathcal{N}}} \ar[u]^{\Delta \otimes \Delta} &\Gamma^{p+q}(L) \ar[u]^\Delta
}
\]
\end{lemma}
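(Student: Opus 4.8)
The plan is to prove the commutativity of the diagram by reducing it to the level of the circle operations $\circ_k^\sigma$, since both brackets $[\cdot,\cdot]_L$ and $[\cdot,\cdot]_{\mathcal{N}}$ are built from these by the same signed-shuffle summation formula $i_f(g) = \sum_{k}(-1)^{kq}\sum_\sigma \epsilon(\sigma)\, f\circ_k^\sigma g$ and the same antisymmetrization $[f,g] = i_f(g) + (-1)^{pq+1} i_g(f)$. Because the embedding $\Delta$ is \emph{linear} and the coefficients $(-1)^{kq}$, $\epsilon(\sigma)$, $(-1)^{pq+1}$ are identical on both sides, it suffices to establish the single identity
\begin{equation*}
\Delta(f \circ_k^\sigma g) = \Delta(f) \circ_k^\sigma \Delta(g)
\end{equation*}
for all $f\in\Gamma^p(L)$, $g\in\Gamma^q(L)$, all $0\le k\le p$, and all $(q,p-k)$-shuffles $\sigma$. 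Once this is shown, summing over $k$ and $\sigma$ with the stated signs gives $\Delta(i_f(g)) = i_{\Delta f}(\Delta g)$, and then the antisymmetrization gives $\Delta([f,g]_{\mathcal{N}}) = [\Delta f, \Delta g]_L$, which is exactly the assertion of the diagram.

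First I would unwind both sides of the key identity explicitly. On the left, one applies the definition of $\circ_k^\sigma$ in $\Gamma^*(L)$ (the formulas displayed just before Proposition \ref{pipi}, involving the inner sum $\sum_{s=1}^{n-1}$ that inserts $g(X_k^s,\ldots)$ into the $s$-th tensor slot of $X_k$) to obtain an element of $C^{p+q+1}(L,L) = \mathrm{Hom}(L\otimes\mathcal{D}_{n-1}^{\otimes(p+q)}, L)$, and then applies $\Delta$, which by its definition spreads the result across the $n-1$ tensor factors of $\mathcal{D}_{n-1}(L)$ via the outer sum $\sum_{i=1}^{n-1}\alpha^{p+q-1}(X_1^1)\otimes\cdots\otimes(\,\cdot\,)\otimes\cdots$. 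On the right, one first applies $\Delta$ to $f$ and to $g$ separately (each producing a cochain for the Hom-Leibniz algebra $\g = \mathcal{D}_{n-1}(L)$), and then composes them using the circle operation $\circ_k^\sigma$ for $M^*(\g)$, whose formula reads $f'\circ_k^\sigma g'(x_0\otimes\cdots) = f'(\alpha^q x_0\otimes\cdots\otimes g'(x_k\otimes x_{\sigma(k+1)}\otimes\cdots)\otimes\cdots)$ with the $\g$-level twist $\ba^q = \ba^{\otimes\,\cdots}$ acting on the tensor-word entries.

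The heart of the matter is a bookkeeping argument matching these two tensor expressions slot by slot. The main obstacle I expect is reconciling two genuinely different-looking combinatorial structures: on the left, $\Delta$ acts \emph{after} the composition, so the outer index $i$ and the inner insertion index $s$ appear in a fixed nested order; on the right, $\Delta$ has already been applied to both $f$ and $g$, so the $(n-1)$-fold ``spreading'' from $\Delta g$ sits \emph{inside} a slot of $\Delta f$, producing a doubly-indexed sum that must be shown to coincide, after using the multiplicativity $\ba[X,Y]=[\ba X,\ba Y]$ and the compatibility $\alpha\circ f = f\circ(\alpha\otimes\ba^{\otimes\cdots})$, with the single-outer-sum form of the left-hand side. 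Concretely, one must verify that the powers of $\alpha$ and $\ba$ decorating the untouched arguments agree (this is where the cochain conditions $\alpha_M\circ f = f\circ(\alpha\otimes\ba^{\otimes p-1})$ from Definition \ref{rep} are used to move twists past $f$ and $g$), and that the positions $X_k^s$ at which the bracket or the inner cochain is inserted line up correctly with the shuffle $\sigma$.

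To control the combinatorics I would fix the structure of the proof as a case split on $k$. The case $k=0$ is cleanest: here $\Delta(f\circ_0^\sigma g)$ places $g(z,\ldots)$ directly as the first argument of $f$, and on the right $\Delta(f)\circ_0^\sigma\Delta(g)$ does the same after spreading, so a direct comparison of the two $\sum_{i=1}^{n-1}$ expansions, together with a single application of the cochain twist condition, closes it. For $1\le k\le p$ the inner $\sum_{s=1}^{n-1}$ interacts with the outer $\Delta$-sum, and here I would argue that the two nested sums are reindexed versions of one another, using the explicit form of the $\mathcal{D}_{n-1}(L)$-bracket from the Proposition in Section 1 to confirm that inserting $g$ into the $s$-th factor of $X_k$ and then applying $\Delta$ agrees with first $\Delta$-spreading $f$ and then feeding in $\Delta g$. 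Since both constructions are transcriptions of the same Balavoine-type recipe and $\Delta$ is manifestly compatible with $\alpha$, $\ba$, and the brackets, the verification is ultimately routine termwise matching; I would present the $k=0$ case in full and indicate that the general $k$ case follows by the same reindexing, thereby completing the proof of Lemma \ref{com}.
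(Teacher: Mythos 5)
Your reduction to the single identity $\Delta(f\circ_k^\sigma g)=\Delta(f)\circ_k^\sigma\Delta(g)$ fails at exactly the case you call cleanest, $k=0$, and this is a genuine gap. For $f\in\Gamma^p(L)$, $g\in\Gamma^q(L)$, expand the right-hand side: the first argument of $\Delta f$ is $\Delta g(X_0,X_{\sigma(1)},\ldots,X_{\sigma(q)})=\sum_{i=1}^{n-1}\alpha^q(X_0^1)\otimes\cdots\otimes g(X_0^i,X_{\sigma(1)},\ldots,X_{\sigma(q)})\otimes\cdots\otimes\alpha^q(X_0^{n-1})$, and $\Delta f$ then spreads $f$ over the $n-1$ slots of each summand, producing a double sum over pairs $(i,j)$. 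Only the diagonal terms $j=i$, carrying $f\bigl(g(X_0^i,X_{\sigma(1)},\ldots,X_{\sigma(q)}),\ba^q X_{\sigma(q+1)},\ldots\bigr)$ in slot $i$, reproduce $\Delta(f\circ_0^\sigma g)$; the off-diagonal terms, with $f(\alpha^q X_0^j,\ba^q X_{\sigma(q+1)},\ldots)$ in slot $j$ and $\alpha^p\bigl(g(X_0^i,X_{\sigma(1)},\ldots,X_{\sigma(q)})\bigr)$ in slot $i\neq j$, do not vanish. Hence $\Delta(i_f(g))\neq i_{\Delta f}(\Delta g)$ in general, and your claim that summing over $k$ and $\sigma$ already yields this equality is false. (For $1\le k\le p$ your identity is correct: there $\Delta g(X_k,\ldots)$ occupies a $\mathcal{D}_{n-1}$-slot of $f$, multilinearity in that slot produces precisely the inner sum $\sum_{s=1}^{n-1}$ defining $f\circ_k^\sigma g$, and $\Delta f$ spreads only over the untouched first argument $\ba^q X_0$; the failure is localized at $k=0$.)

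The missing idea is that the off-diagonal terms cancel only after antisymmetrization, i.e.\ between $i_{\Delta f}(\Delta g)$ and $(-1)^{pq+1}i_{\Delta g}(\Delta f)$: pair a $(q,p)$-shuffle $\sigma$ with positions $(i,j)$ against the complementary $(p,q)$-shuffle $\tau$ (with $\tau(1),\ldots,\tau(p)$ equal to $\sigma(q+1),\ldots,\sigma(p+q)$) with positions $(j,i)$. Using the cochain condition $\alpha^p\circ g=g\circ(\alpha^p\otimes\ba^p\otimes\cdots\otimes\ba^p)$, the two tensor words coincide, and since $\epsilon(\tau)=(-1)^{pq}\epsilon(\sigma)$, the coefficient $(-1)^{pq+1}\epsilon(\tau)=-\epsilon(\sigma)$ makes them cancel pairwise. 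This is precisely why the lemma asserts compatibility of $\Delta$ with the brackets $[\cdot,\cdot]_{\mathcal{N}}$ and $[\cdot,\cdot]_L$ and not with the operations $i_f$ themselves. The paper's proof accordingly expands $\Delta[f,g]_{\mathcal{N}}$ and $[\Delta f,\Delta g]_L$ with both summands $i_{\Delta f}(\Delta g)$ and $(-1)^{pq+1}i_{\Delta g}(\Delta f)$ present from the start and matches terms and signs across the whole expression; your outline, which certifies each $\circ_k^\sigma$ separately and treats the antisymmetrization as a formal afterthought, cannot be completed as stated without adding this cancellation argument at $k=0$.
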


{\bf Proof.} Let $f\in \Gamma^p(L)$ and $g\in \Gamma^q(L)$. We need to show that $[\Delta f, \Delta g]_L= \Delta [f, g]_\mathcal{N}.$

Now, $[\Delta f, \Delta g]_L (X_0, X_1, \cdots ,X_{p+q})=\{ i_{\Delta f}(\Delta g) + (-1)^{pq +1} i_{\Delta g} (\Delta f)\} (X_0, X_1, \cdots ,X_{p+q})$. On the other hand, 

\begin{equation}
\begin{array}{ll}
&\Delta[f, g]_\mathcal{N}(X_0, X_1, \cdots, X_{p+q}) \\
&= \sum_{i=1}^{n-1} \alpha^{p+q-1}X_0^1 \otimes \cdots \otimes \alpha^{p+q-1} X_0^{i-1}\otimes [f, g]_\mathcal{N}(X_0^i, X_1, \cdots, X_{p+q}), \alpha^{p+q-1}X_0^{i+1} \otimes \\
& \hspace{1cm}  \cdots \otimes \alpha^{p+q-1}X_0^{n-1} \\
&= \sum_{i=1}^{n-1} \alpha^{p+q-1}X_0^1 \otimes \cdots \otimes \alpha^{p+q-1} X_0^{i-1}\otimes \{i_f(g) + (-1)^{pq+1} i_g(f)\}(X_0^i, X_1, \cdots, X_{p+q})\otimes \\
&\hspace{1cm} \alpha^{p+q-1}X_0^{i+1} \otimes \cdots \otimes \alpha^{p+q-1}X_0^{n-1}.
\end{array}
\end{equation}

By definition,
$$\begin{array}{ll}
& i_f(g)(X_0^i, X_1, \cdots, X_{p+q})=
\sum_{k=0}^p (-1)^{kq} \sum_{\sigma \in Sh(q, p-k)} \epsilon(\sigma) f \circ_k^\sigma g (X_0^i, X_1, \cdots, X_{p+q}).
\end{array}$$
Now $$\begin{array}{ll}
&f \circ_k^\sigma g (X_0^i, X_1, \cdots, X_{p+q})\\
& = \sum_{s=1}^{n-1} f(\alpha^q X_0^i, \ba^q X_1, \cdots, \ba^q X_{k-1}, \alpha^q X_k^1, \cdots, \alpha^q X_k^{s-1},\\
&g(X_k^s, X_{\sigma(k+1)}, \cdots, X_{\sigma(k+q)}), \alpha^q X_k^{s+1}, \cdots, \alpha^q X_k^{n-1}, \ba^q X_{\sigma(k+q+1)}, \cdots, \ba^q X_{\sigma(p+q)}).
\end{array}$$

Sign of this term in the expression of $\Delta[f, g]_\mathcal{N}(X_0, X_1, \cdots, X_{p+q})$ is $(-1)^{kq} \epsilon(\sigma)$.

On the other hand, 
\begin{equation}\label{lhs}
\begin{array}{ll}
&i_{\Delta f}(\Delta g) (X_0, X_1, \cdots, X_{p+q})\\
&= \sum_{k=0}^p (-1)^{kq} \sum_{\sigma \in Sh(q, p-k)} \epsilon(\sigma) \Delta f \circ_k^\sigma \Delta g(X_0, X_1, \cdots, X_{p+q}).
\end{array}
\end{equation}
 
Now $$\begin{array}{ll}
&\Delta f \circ_k^\sigma \Delta g (X_0, X_1, \cdots, X_{p+q})\\
=&\Delta f (\ba^q X_0, \ba^q X_1, \cdots, \ba^q X_{k-1}, \Delta g( X_k, X_{\sigma(k+1)}, \cdots, X_{\sigma(k+q)}),\\
& \ba^q X_{\sigma(k+q+1)}, \cdots, \ba^q X_{\sigma(p+q)}).
\end{array}$$

Also,
$$\begin{array}{ll}
&\Delta g (X_k, X_{\sigma(k+1)}, \cdots, X_{\sigma(k+q)})\\
&= \sum_{i=1}^{n-1} \alpha^q X_k^1 \otimes \cdots \otimes \alpha^q X_k^{i-1} \otimes g(X_k^i, X_{\sigma(k+1)}, \cdots, X_{\sigma(k+q)})\otimes \alpha^q X_k^{i+1} \otimes \cdots \otimes \alpha^q X_k^{n-1}\\
&=A, \mbox{say}.
\end{array}$$

Replacing the above term $A$ in the expression of $\Delta f \circ_k^\sigma \Delta g (X_0, X_1, \cdots, X_{p+q})$ and writing out explicitly, we get 

$$\begin{array}{ll}
 &\Delta f \circ_k^\sigma \Delta g (X_0, X_1, \cdots, X_{p+q})\\
&=\sum_{j=1}^{n-1} \alpha^{p+q-1} X_0^1 \otimes \cdots \alpha^{p+q-1} X_0^{j-1} \otimes f(\alpha^q X_0^j, \ba^q X_1, \cdots, \ba^q X_{k-1}, A, \ba^q X_{\sigma(k+q+1)}, \cdots, \ba^q X_{\sigma(p+q)})\\
& \otimes \alpha^{p+q-1}X_0^{j+1} \otimes \cdots \otimes \alpha^{p+q-1}X_0^{n-1}.
\end{array}$$

Sign of this term is $(-1)^{kq} \epsilon(\sigma)$ in the expression \eqref{lhs} . 

Similarly, the term $\Delta g \circ_k ^\sigma \Delta f (X_0, X_1, \cdots, X_{p+q})$ is same as the term 
$$\begin{array}{ll}
&\sum_{i=1}^{n-1} \alpha^{p+q-1}X_0^1 \otimes \cdots \otimes \alpha^{p+q-1} X_0^{i-1} \otimes g\circ_k^\sigma f(X_0^i, X_1, \cdots, X_{p+q})\\
&\otimes \alpha^{p+q-1} X_0^{i+1} \otimes \cdots \otimes \alpha^{p+q-1} X_0 ^{n-1}\\
&=\sum_{i=1}^{n-1} \alpha^{p+q-1}X_0^1 \otimes \cdots \otimes \alpha^{p+q-1} X_0^{i-1} \otimes\\
& g(\alpha^p X_0^i, \ba^p X_1, \cdots, \ba^p X_{k-1}, \alpha^p X_k^1, \cdots, \alpha^p X_k^{s-1}, f(X_k^s, X_{\sigma(k+1)}, \cdots, X_{\sigma(k+p)}),\\
& \alpha^p X_k^{s+1}, \cdots, \alpha^p X_k^{n-1}, \ba^p X_{\sigma(k+p+1)}, \cdots, \ba^p X_{\sigma(p+q)})\\
&\otimes X_0^{i+1} \otimes \cdots \otimes \alpha^{p+q-1} X_0 ^{n-1}.
\end{array}$$ 

Signs of both of these terms are $(-1)^{pq+1} (-1)^{kp} \epsilon(\sigma)$. This proves the lemma.

The following proposition follows from \,Lemma \ref{com} and the fact that $\Delta$ is injective.
\begin{proposition}\label{Jacobi}
The bracket $[.,.]_{\mathcal{N}}$ so defined on $\Gamma^*(L)$ of a $n$-Hom-Leibniz algebra $L$ satisfies the graded Jacobi identity.
\end{proposition}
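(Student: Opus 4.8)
The plan is to deduce the graded Jacobi identity for $[\cdot,\cdot]_\mathcal{N}$ by transporting it along the injective map $\Delta$ from the already-established graded Lie algebra $M^*(\mathfrak{g})$. Fix homogeneous cochains $f \in \Gamma^p(L)$, $g \in \Gamma^q(L)$, $h \in \Gamma^r(L)$ and write
$$J(f,g,h) = (-1)^{pr}\,[[f,g]_\mathcal{N},h]_\mathcal{N} + (-1)^{qp}\,[[g,h]_\mathcal{N},f]_\mathcal{N} + (-1)^{rq}\,[[h,f]_\mathcal{N},g]_\mathcal{N}$$
for the graded Jacobiator; the goal is $J(f,g,h)=0$. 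Since $\alpha$ is assumed injective, the embedding lemma makes $\Delta\colon \Gamma^*(L) = C^{*+1}(L,L) \to M^*(\mathfrak{g}) = C^{*+1}(\mathfrak{g},\mathfrak{g})$ injective, so it suffices to prove $\Delta J(f,g,h) = 0$.

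The main step is to apply Lemma \ref{com} iteratively. That lemma gives $\Delta[u,v]_\mathcal{N} = [\Delta u,\Delta v]_L$ for all homogeneous $u,v$; since $[f,g]_\mathcal{N}$ is itself a homogeneous cochain of degree $p+q$, I may apply the lemma first to the pair $([f,g]_\mathcal{N},h)$ and then again to $(f,g)$, obtaining
$$\Delta\big([[f,g]_\mathcal{N},h]_\mathcal{N}\big) = [\Delta[f,g]_\mathcal{N},\Delta h]_L = [[\Delta f,\Delta g]_L,\Delta h]_L,$$
and likewise for the two cyclic terms. As $\Delta$ is linear and preserves degree, the signs $(-1)^{pr}$, $(-1)^{qp}$, $(-1)^{rq}$ are unchanged under $\Delta$, so summing gives $\Delta J(f,g,h) = J_L(\Delta f,\Delta g,\Delta h)$, the graded Jacobiator of $\Delta f,\Delta g,\Delta h$ for $[\cdot,\cdot]_L$. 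By the preceding lemma, $(M^*(\mathfrak{g}),[\cdot,\cdot]_L)$ is a graded Lie algebra, so this Jacobiator vanishes; hence $\Delta J(f,g,h)=0$, and injectivity of $\Delta$ yields $J(f,g,h)=0$.

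The argument is essentially formal, so the only point requiring care is the compatibility of the grading and sign conventions on the two sides. One must verify that the degree $p$ assigned to $\Gamma^p(L)$ and to $M^p(\mathfrak{g})$ is the common shifted degree used in both bracket definitions, so that $\Delta$ is genuinely of degree $0$ and the signs in $J$ and in $J_L$ coincide after applying $\Delta$. I expect this to be the only potential obstacle; all the substantive content already resides in Lemma \ref{com} and in the graded Lie structure of $M^*(\mathfrak{g})$, and transport of the Jacobi identity along an injective bracket-preserving map is then automatic.
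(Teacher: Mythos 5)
Your proposal is correct and follows exactly the paper's own route: the paper proves this proposition precisely by combining Lemma~\ref{com} (that $\Delta$ intertwines $[\cdot,\cdot]_{\mathcal{N}}$ with $[\cdot,\cdot]_L$) with the injectivity of $\Delta$ (guaranteed by the standing assumption that $\alpha$ is injective) and the graded Lie structure on $M^*(\mathfrak{g})$. Your write-up merely spells out the transport of the graded Jacobiator that the paper leaves implicit, so there is nothing to add.
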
\qed

Using Propositions \,\ref{pipi},  \,\ref{pi} and \ref{Jacobi}, we have the following proposition.

\begin{proposition} Let $f\in \Gamma^p(L), g \in \Gamma^q(L)$, then
$$\delta[f, g]_{\mathcal{N}}=(-1)^{q+1} [\delta f, g]_{\mathcal{N}} + [f, \delta g]_{\mathcal{N}}.$$
\end{proposition}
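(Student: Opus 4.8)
The plan is to read the claimed identity as the statement that the coboundary $\delta$ is a graded derivation of the bracket $[\cdot,\cdot]_{\mathcal{N}}$, and to derive it purely formally from the three facts already established: that $\delta$ is (up to sign) the inner operation $f \mapsto -[f,\pi]_{\mathcal{N}}$ given by the distinguished element $\pi \in \Gamma^1(L)$ (Proposition \ref{pi}), that $\pi$ satisfies $[\pi,\pi]_{\mathcal{N}} = 0$ (Proposition \ref{pipi}), and that $[\cdot,\cdot]_{\mathcal{N}}$ obeys the graded Jacobi identity (Proposition \ref{Jacobi}). Together these say that $(\Gamma^*(L),[\cdot,\cdot]_{\mathcal{N}})$ is a graded Lie algebra carrying a Maurer--Cartan element $\pi$, and the desired relation is the standard compatibility of the differential $\delta = -[\,\cdot\,,\pi]_{\mathcal{N}}$ with the bracket in such a differential graded Lie algebra.

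Concretely, I would proceed as follows. First rewrite the left-hand side using Proposition \ref{pi}, obtaining $\delta[f,g]_{\mathcal{N}} = -[[f,g]_{\mathcal{N}},\pi]_{\mathcal{N}}$. Next apply the graded Jacobi identity of Proposition \ref{Jacobi} to the triple $(f,g,\pi)$, where $f$, $g$, $\pi$ carry degrees $p$, $q$, $1$ respectively; this expresses $[[f,g]_{\mathcal{N}},\pi]_{\mathcal{N}}$ as a signed sum of $[[g,\pi]_{\mathcal{N}},f]_{\mathcal{N}}$ and $[[\pi,f]_{\mathcal{N}},g]_{\mathcal{N}}$. Then substitute $[g,\pi]_{\mathcal{N}} = -\delta g$ and, using graded antisymmetry together with Proposition \ref{pi}, $[\pi,f]_{\mathcal{N}} = (-1)^{p}\,\delta f$, and reorder the outer brackets by antisymmetry so that $\delta f$ and $\delta g$ sit in the same slots as in the target identity. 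Collecting the accumulated signs then yields $\delta[f,g]_{\mathcal{N}} = (-1)^{q+1}[\delta f,g]_{\mathcal{N}} + [f,\delta g]_{\mathcal{N}}$. The vanishing $[\pi,\pi]_{\mathcal{N}} = 0$ from Proposition \ref{pipi} is what guarantees that $\delta^2 = 0$ and hence that this derivation relation is internally consistent.

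The main obstacle is entirely the sign bookkeeping. One must fix once and for all a single convention for the grading (taking $f \in \Gamma^p(L) = C^{p+1}(L,L)$ to have degree $p$, so that $[\cdot,\cdot]_{\mathcal{N}}$ has degree $0$ and $\delta$ has degree $+1$), for the graded antisymmetry $[a,b]_{\mathcal{N}} = -(-1)^{|a||b|}[b,a]_{\mathcal{N}}$, and for the precise form of the graded Jacobi identity recorded in Proposition \ref{Jacobi}; the exact placement of the sign $(-1)^{q+1}$ in the statement is dictated by these choices and by the handedness built into $\delta = -[\,\cdot\,,\pi]_{\mathcal{N}}$. No further structural input is required beyond the three cited propositions, so once the conventions are pinned down the verification is a short and direct manipulation; I would double-check it by testing small degrees (for instance $p = q = 1$) against the explicit formula for $\delta$ given in the definition of the cochain complex.
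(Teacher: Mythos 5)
Your plan is structurally identical to the paper's own proof, which consists in its entirety of the phrase ``Using Propositions \ref{pipi}, \ref{pi} and \ref{Jacobi}'': read $\delta$ as $-[\,\cdot\,,\pi]_{\mathcal{N}}$, apply the graded Jacobi identity to the triple $(f,g,\pi)$, and collect signs. So you have found exactly the intended argument, and your remark that $[\pi,\pi]_{\mathcal{N}}=0$ is only needed for the consistency $\delta^2=0$ (not for the derivation identity itself) is also accurate.

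However, the sign bookkeeping you defer does not come out the way you assert under the conventions you yourself fix. Take $f\in\Gamma^p(L)$ of degree $p$, $g\in\Gamma^q(L)$ of degree $q$, $\pi$ of degree $1$, antisymmetry $[a,b]_{\mathcal{N}}=-(-1)^{|a||b|}[b,a]_{\mathcal{N}}$, and the standard graded Jacobi identity in the right-derivation form $[[a,b],c]=[a,[b,c]]+(-1)^{|b||c|}[[a,c],b]$. With $c=\pi$ and Proposition \ref{pi} in the form $[h,\pi]_{\mathcal{N}}=-\delta h$, this gives
\[
-\delta[f,g]_{\mathcal{N}}=[[f,g]_{\mathcal{N}},\pi]_{\mathcal{N}}=[f,[g,\pi]_{\mathcal{N}}]_{\mathcal{N}}+(-1)^{q}[[f,\pi]_{\mathcal{N}},g]_{\mathcal{N}}=-[f,\delta g]_{\mathcal{N}}-(-1)^{q}[\delta f,g]_{\mathcal{N}},
\]
that is, $\delta[f,g]_{\mathcal{N}}=(-1)^{q}[\delta f,g]_{\mathcal{N}}+[f,\delta g]_{\mathcal{N}}$, with $(-1)^{q}$ and not $(-1)^{q+1}$; one reaches the same conclusion via your substitutions $[g,\pi]_{\mathcal{N}}=-\delta g$ and $[\pi,f]_{\mathcal{N}}=(-1)^{p}\delta f$. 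The stated sign $(-1)^{q+1}$ is recovered precisely when $q$ is read as the \emph{cochain} degree, i.e.\ $g\in C^{q}(L,L)=\Gamma^{q-1}(L)$ --- which is in fact the convention used in Proposition \ref{pi}, where ``$p$-cochain'' means $f\in C^{p}(L,L)$ and accordingly $[f,\pi]_{\mathcal{N}}=i_f(\pi)+(-1)^{p}i_\pi(f)$. So the mismatch you would hit in your promised small-degree check is a degree-convention slippage between $\Gamma^p=C^{p+1}$ and $C^p$ in the paper's statement, not a structural flaw in your argument; but as written, your proof would terminate in an identity whose sign contradicts the one you claim to derive, and a complete write-up must fix one grading convention and state (or correct) the sign accordingly.
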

Finally, we have, 
\begin{theorem}
The induced bracket $[.,.]_{\mathcal{N}}$ of degree $-1$ defined on the cohomology space $H^*(L, L)$ of a $n$-Hom-Leibniz algebra $L$ turns it into a graded Lie algebra. \qed
\end{theorem}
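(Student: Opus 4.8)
The plan is to deduce the final theorem as a formal consequence of the structural results already established for the bracket $[\cdot,\cdot]_{\mathcal{N}}$ on the cochain complex $\Gamma^*(L)$. The essential ingredients are exactly the preceding propositions: the graded Jacobi identity (Proposition \ref{Jacobi}), the compatibility of $\delta$ with the bracket via the distinguished $2$-cochain $\pi$ (Propositions \ref{pipi} and \ref{pi}, giving $\delta f = -[f,\pi]_{\mathcal{N}}$ and $[\pi,\pi]_{\mathcal{N}}=0$), and the Leibniz-type compatibility $\delta[f,g]_{\mathcal{N}} = (-1)^{q+1}[\delta f, g]_{\mathcal{N}} + [f,\delta g]_{\mathcal{N}}$ stated just before the theorem. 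The claim is that these assemble $\left(\Gamma^*(L), \delta, [\cdot,\cdot]_{\mathcal{N}}\right)$ into a differential graded Lie algebra, and hence pass the bracket to homology.

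First I would observe that the final derivation property says precisely that $\delta$ is a graded derivation of degree $+1$ for the bracket of degree $-1$. From this the bracket descends to cohomology in the standard way: if $f$ and $g$ are cocycles, then $\delta[f,g]_{\mathcal{N}} = (-1)^{q+1}[\delta f, g]_{\mathcal{N}} + [f, \delta g]_{\mathcal{N}} = 0$, so $[f,g]_{\mathcal{N}}$ is again a cocycle; and if in addition $f = \delta h$ is a coboundary while $g$ is a cocycle, then $[f,g]_{\mathcal{N}} = [\delta h, g]_{\mathcal{N}} = \pm\,\delta[h,g]_{\mathcal{N}} \mp [h,\delta g]_{\mathcal{N}} = \pm\,\delta[h,g]_{\mathcal{N}}$ is a coboundary (the sign bookkeeping being forced by the derivation formula with $\delta g = 0$). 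The analogous computation handles the case when the coboundary sits in the second slot. This shows the bracket on $H^*(L,L)$ is well defined, independent of representatives.

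Next I would verify that the induced operation inherits the graded-antisymmetry and the graded Jacobi identity directly from Proposition \ref{Jacobi}, since these are identities already holding at the cochain level and are unaffected by passing to the quotient $\mathcal{Z}^*/\mathcal{B}^*$. Keeping track of the degree shift (the bracket lowers cohomological degree by one because $\Gamma^p(L) = C^{p+1}(L,L)$), the induced bracket has degree $-1$ on $H^*(L,L)$ exactly as asserted. Assembling these points gives the graded Lie algebra structure on the cohomology space.

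The only genuine point requiring care, and the step I would treat most carefully, is the sign convention: one must confirm that the derivation formula $\delta[f,g]_{\mathcal{N}} = (-1)^{q+1}[\delta f,g]_{\mathcal{N}} + [f,\delta g]_{\mathcal{N}}$ really does make $[\delta h, g]_{\mathcal{N}}$ an exact cocycle when $g$ is closed, rather than forcing an obstruction term. Since $\delta = -[\,\cdot\,,\pi]_{\mathcal{N}}$, the derivation property and the well-definedness are not independent; they are both shadows of the graded Jacobi identity applied to the triple $(f,g,\pi)$ together with $[\pi,\pi]_{\mathcal{N}}=0$. If I wished to be fully self-contained I would simply re-derive the descent by invoking graded Jacobi on $(f,g,\pi)$ rather than quoting the derivation lemma as a black box, but since that lemma is already available the cleanest route is the one above. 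No deeper obstacle arises, as the heavy lifting (the graded Jacobi identity) was transported from the Hom-Leibniz case through the injective chain map $\Delta$ in Lemma \ref{com}.
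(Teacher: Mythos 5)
Your proposal is correct and follows essentially the same route as the paper, which offers no written argument beyond the \qed: the theorem there is presented as an immediate consequence of Propositions \ref{pipi}, \ref{pi}, \ref{Jacobi} and the derivation identity $\delta[f,g]_{\mathcal{N}}=(-1)^{q+1}[\delta f,g]_{\mathcal{N}}+[f,\delta g]_{\mathcal{N}}$, and your write-up simply makes explicit the standard descent computation (cocycles close under the bracket, $[\delta h,g]_{\mathcal{N}}=\pm\,\delta[h,g]_{\mathcal{N}}$ when $g$ is closed, and antisymmetry together with the graded Jacobi identity pass to the quotient) that the authors leave implicit. Your sign bookkeeping in both slots and the observation that everything reduces to graded Jacobi applied with $\pi$ are accurate and, if anything, more careful than the source.
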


\section{Deformation cohomology of $n$-Hom-Leibniz algebras}
Deformation theory was introduced first by Gerstenhaber for rings and associative algebras in \cite{Gerstenhaber} using formal power series, then extended to Lie algebras by Nijenhuis and Richardson \cite{NR}. The main results connect  the properties of  deformations to a cohomology groups of a suitable cohomology complex, which are Hochschild cohomology for associative algebras and Chevalley-Eilenberg cohomology for Lie algebras. 
 The complex governing infinitesimal deformations of a $n$-Lie algebra was successfully defined by Gautheron in \cite{Gautheron} (1996) after an initial attempt by Takhtajan \cite{Takhtajan1} (1994).
Then, Daletskii and Takhtajan \cite{Daletskii,Takhtajan} (1997) rewrote Gautheron's work in terms of a subcomplex.

The suitable cohomology in the study of $n$-Hom-Leibniz algebras deformations corresponds to  the particular case of Section 3, where $M=L$. In the sequel we provide the theory for $n$-Hom-Leibniz algebras. We may recall first, second and third coboundary maps.

Let $\mathbb{K}[[t]]$ denote the formal power series ring over $\mathbb{K}$. A formal deformation of a $n$-Hom-Leibniz algebra $(L, [\cdot,\cdot ], \alpha)$ is a map
$$f_t: L[[t]] ^{\otimes n}\longrightarrow L[[t],$$ where tensor product taken over $\mathbb{K}[[t]]$, such that for $X=(x_1, x_2,\ldots, x_n) \in L^{\otimes n}$, 
$$f_t(X)= F_0(X)+ t F_1(X) +t^2 F_2(X)+ \ldots$$  for some $F_i : L^{\otimes n}\longrightarrow L$, $i\geq 1$, $F_0$ being the bracket in $L$, and $f_t$ defining a $n$-Hom-Leibniz algebra structure on $L[[t]]$.

Explicitly, this would mean 
$$\begin{array}{ll}
&f_t(f_t(X),\alpha(y_1), \cdots,\alpha(y_{n-1}))\\
&=\sum_{i=1}^n f_t(\alpha(x_1), \cdots, \alpha(x_{i-1}), f_t(x_i, y_1, \cdots, y_{n-1}), \alpha(x_{i+1}), \cdots \alpha(x_n)).
\end{array}
$$
 
This gives rise to the following infinite system of equations:

$$\begin{array}{ll}
&\sum_{j+k=s} t^s\{ F_j(F_k(X), \alpha(y_1), \cdots, \alpha(y_{n-1}) \}\\
&= \sum_{i=1}^n \sum_{j+k=s} t^s F_j(\alpha(x_1), \ldots, \alpha(x_{i-1}), F_k(x_i, y_1, \ldots, y_{n-1}, \alpha(x_{i+1}), \ldots, \alpha(x_{n}))).
\end{array}$$

So equating coefficients of $t^s$, we get the following system of equations for all $s\geq 0$.
\begin{equation}\label{def}
\begin{array}{ll}
&\sum_{j+k=s}  F_j(F_k(X), \alpha(y_1), \cdots, \alpha(y_{n-1}))\\
& = \sum_{i=1}^n \sum_{j+k=s}  F_j(\alpha(x_1), \ldots, \alpha(x_{i-1}), F_k(x_i, y_1, \ldots, y_{n-1}), \alpha(x_{i+1}), \ldots, \alpha(x_{n})))
\end{array}
\end{equation}

For $s=0$, we get back the defining relation for the $n$-Hom-Leibniz algebra bracket.

For $s=1$, we get 
$$\begin{array}{ll}
& F_0(F_1(X), \alpha(y_1), \cdots, \alpha(y_{n-1})) + F_1 (F_0(X), \alpha(y_1), \cdots, \alpha(y_{n-1})) \\
& = \sum_{i=1}^n  \{ F_0(\alpha(x_1), \ldots, \alpha(x_{i-1}), F_1(x_i, y_1, \ldots, y_{n-1}), \alpha(x_{i+1}), \ldots, \alpha(x_{n}))\\
&+   F_1(\alpha(x_1), \ldots, \alpha(x_{i-1}), F_0(x_i, y_1, \ldots, y_{n-1}), \alpha(x_{i+1}), \ldots, \alpha(x_{n}))\}.
\end{array}$$

As $F_0$ is the bracket for $L$, we have,
\begin{equation}\label{F1}
\begin{array}{ll}
& [F_1(X), \alpha(y_1), \cdots, \alpha(y_{n-1})] + F_1 ([X], \alpha(y_1), \cdots, \alpha(y_{n-1})) \\
&= \sum_{i=1}^n  \{[\alpha(x_1), \ldots, \alpha(x_{i-1}), F_1(x_i, y_1, \ldots, y_{n-1}), \alpha(x_{i+1}), \ldots, \alpha(x_{n})]\\
&+   F_1(\alpha(x_1), \ldots, \alpha(x_{i-1}), [x_i, y_1, \ldots, y_{n-1}], \alpha(x_{i+1}), \ldots, \alpha(x_{n}))\}.
\end{array}
\end{equation}

It turns out, from equation \eqref{F1} that 
\begin{proposition} Thinking of $F_1: L^{\otimes n}\longrightarrow L$ as a map from $L\otimes L^{\otimes n-1}\longrightarrow L$, $F_1$  is a $2$-cocycle.
\end{proposition}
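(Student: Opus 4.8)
The plan is to recognise equation \eqref{F1} as nothing but the assertion $\delta^2(F_1)=0$, once $F_1$ is viewed as an element of $C^2(L,L)$. First I would record that $F_1$ genuinely is a $2$-cochain: splitting off the first tensor slot turns $F_1\colon L^{\otimes n}\to L$ into a map $L\otimes \mathcal{D}_{n-1}(L)\to L$, and the compatibility $\alpha\circ F_1=F_1\circ(\alpha\otimes\ba)$ demanded of a cochain is exactly the order-$t$ part of the multiplicativity condition $\alpha\circ f_t=f_t\circ(\alpha^{\otimes n})$ of the deformation. With this identification fixed, what remains is to compare \eqref{F1} with the four-term coboundary $\delta^2$.

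The quickest route uses the machinery of Section~5. Writing $F_0=\pi$, where $\pi$ is the distinguished $2$-cochain of Proposition~\ref{pipi}, the order-zero instance of \eqref{def} is precisely $[\pi,\pi]_{\mathcal{N}}=0$, i.e.\ the fundamental identity \eqref{nhomleib}. The order-one equation \eqref{F1} is the bilinearisation of the deformed fundamental identity, and so reads $[F_1,\pi]_{\mathcal{N}}=0$: the two nested compositions appearing in \eqref{F1} (bracket outermost with $F_1$ inside, and $F_1$ outermost with a bracket inside) are exactly $i_\pi(F_1)$ and $i_{F_1}(\pi)$, whose sum is $[F_1,\pi]_{\mathcal{N}}$ since both $F_1$ and $\pi$ lie in $\Gamma^1(L)$. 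Invoking Proposition~\ref{pi}, which gives $-\delta^2(F_1)=[F_1,\pi]_{\mathcal{N}}$, equation \eqref{F1} yields $\delta^2(F_1)=0$ at once.

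If one prefers to bypass the bracket formalism, I would instead verify the matching directly. Under the identification $z=x_1$, $X_1=(x_2,\dots,x_n)$ and $X_2=Y=(y_1,\dots,y_{n-1})$, I expand $\delta^2(F_1)(z,X_1,X_2)$ term by term. The first (double-sum) term contributes only the pair $(i,j)=(1,2)$, giving $F_1(\alpha(z),[X_1,X_2])$; unwinding the Hom-Leibniz bracket on $\mathcal{D}_{n-1}(L)$ reproduces the $i=2,\dots,n$ summands of $\sum_{i=1}^{n}F_1(\alpha(x_1),\dots,[x_i,Y],\dots,\alpha(x_n))$, while the missing $i=1$ summand is furnished by the $+F_1([z,X_2],\ba(X_1))$ term of the second sum. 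Likewise, the $i=1$ term of the third sum produces the $i=1$ summand of $\sum_{i=1}^{n}[\alpha(x_1),\dots,F_1(x_i,Y),\dots,\alpha(x_n)]$, the remaining $i=2,\dots,n$ summands coming from the last term of $\delta^2$. The two surviving pieces are $-F_1([X],\alpha(Y))$ (the $i=1$ term of the second sum) and $-[F_1(X),\alpha(Y)]$ (the $i=2$ term of the third sum). Setting $\delta^2(F_1)=0$ and transposing these two pieces to the left-hand side recovers \eqref{F1} verbatim.

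The computation is otherwise routine; the one delicate point, and hence the main obstacle, is the bookkeeping of the signs $(-1)^j,(-1)^i,(-1)^{i+1}$ together with the omitted first index. Concretely, one must confirm that these signs conspire so that precisely $F_1([X],\alpha(Y))$ and $[F_1(X),\alpha(Y)]$ migrate to the left of \eqref{F1}, and that the expansion of $[X_1,X_2]$ in $\mathcal{D}_{n-1}(L)$ supplies exactly the $i=2,\dots,n$ inner-bracket terms, the $i=1$ term being delivered separately by the second sum. Once this alignment is checked, the two expressions coincide and the proposition follows.
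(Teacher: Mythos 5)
Your proposal is correct, and it in fact contains two complete arguments. Your second, direct route is essentially the paper's own proof: the paper sets $X=(z,x_1,\ldots,x_{n-1})$, $Y=(y_1,\ldots,y_{n-1})$, rewrites \eqref{F1}, and observes that after transposing the two terms $F_1([X],\ba(Y))$ and $[F_1(X),\ba(Y)]$ the resulting identity is precisely $\delta^2(F_1)=0$; your term-by-term matching (the $(i,j)=(1,2)$ term of the double sum supplying the $i\geq 2$ inner-bracket summands via the expansion of $[X_1,X_2]$ in $\mathcal{D}_{n-1}(L)$, the missing $i=1$ summands coming from the $i$-indexed sums, and the correct identification of the two surviving pieces) is exactly that bookkeeping, with the signs handled correctly. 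Your first route is genuinely different from what the paper does at this point: you read the order-$t$ coefficient of the deformed fundamental identity as $[F_1,\pi]_{\mathcal{N}}=0$ and invoke Proposition~\ref{pi} ($-\delta^p f=[f,\pi]_{\mathcal{N}}$) to conclude. That is slicker and sits well with how the paper itself packages the higher-order equations, namely $\delta^2(F_s)=\tfrac12\sum_{i+j=s,\ i,j>0}[F_i,F_j]_{\mathcal{N}}$ for $s\geq 2$, of which your identity is the $s=1$ case; the price is a dependency on Section~5, and since Proposition~\ref{pi} is itself proved by the same kind of direct expansion, the work is relocated rather than avoided. One further point in your favour: you verify that $F_1$ is a $2$-cochain at all, i.e.\ $\alpha\circ F_1=F_1\circ(\alpha\otimes\ba)$, which the paper silently omits; as you note, this requires the implicit convention that the deformation is multiplicative, $\alpha\circ f_t=f_t\circ\alpha^{\otimes n}$, whose order-$t$ part yields exactly the needed compatibility.
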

{\bf Proof.} Let $X=(z, x_1, \ldots, x_{n-1})$ and $Y=(y_1, \ldots, y_{n-1})$. Then equation \eqref{F1} reads as
$$\begin{array}{ll}
&[F_1(z, x_1, \ldots, x_{n-1}), \alpha(y_1), \ldots, \alpha(y_{n-1})] + F_1([z, x_1, \ldots, x_{n-1}], \alpha(y_1), \ldots, \alpha(y_{n-1}))\\
&= \sum_{i=1}^{n-1}[\alpha(z), \ldots, \alpha(x_{i-1}), F_1(x_i, y_1, \ldots, y_{n-1}), \alpha(x_{i+1}, \ldots, \alpha(x_n)] \\
&+[F_1(z, y_1, \ldots, y_{n-1}), \alpha(x_1),\ldots, \alpha(x_n)]\\
&  +\sum_{i=1}^{n-1} F_1(\alpha(z), \ldots, \alpha(x_{i-1}, [x_i, y_1, \ldots, y_{n-1}], \alpha(x_{i+1}), \ldots, \alpha(x_n))\\
&+F_1([z, y_1, \ldots, y_{n-1}], \alpha(x_1), \ldots, \alpha(x_n)).
\end{array}
$$
Rewriting, 
$$\begin{array}{ll}
&[F_1(z, X), \ba(Y)] + F_1([z, X], \ba(Y)) -  \sum_{i=1}^{n-1}[\alpha(z), \ldots, \alpha(x_{i-1}), F_1(x_i, Y), \alpha(x_{i+1}), \ldots, \alpha(x_n)] \\
&- [F_1(z, Y), \ba(X)]- F_1(\alpha(z), [X, Y]) - F_1([z, Y], \ba(X))=0.
\end{array}$$
This precisely tells us that $\delta^2(F_1)=0$ ie. $F_1$ is a $2$-cocycle.

\qed

Similarly, rewriting the system of equations \eqref{def}, we get, for each $s\geq 2$,\\

$$\delta^2( F_s)= \sum_{\stackrel{i+j=s}{ i,j >0}}i_{F_i}( F_j) = 1/2 \sum_{\stackrel{i+j=s}{ i,j>0}} [F_i, F_j]_{\mathcal{N}}.$$

\begin{definition}
Let $L_f= (L, f_t, \alpha)$ and $L_g= (L, g_t, \alpha)$ be two deformations of a $n$-Hom-Leibniz algebra $L= (L, [\cdot,..,\cdot],\alpha)$ where $f_t= \sum_{i\geq 0}t^i F_i$ and $g_t= \sum_{i\geq 0}t^i G_i$, with $F_0= G_0= [\cdot,..,\cdot]$. We say that $L_f$ and $L_g$ are equivalent if there exists a formal automorphism $\phi_t: L[[t]] \longrightarrow L[[t]]$ of the form $\phi_t= \sum_{i\geq0}\phi_i t^i$ where $\phi_i \in \mbox{End}(L)$ and $\phi_0= \mbox{id}$ such that
\begin{equation}\label{equ1}
\phi_t(f_t(x_1, \ldots, x_n))= g_t(\phi_t(x_1), \ldots, \phi_t(x_n))
\end{equation}
\begin{equation}\label{equ2}
\phi_t(\alpha(x))= \alpha(\phi_t(x)).
\end{equation}
\end{definition}

A deformation $L_f$ of $L$ is said to be trivial if and only if $L_f$ is equivalent to $L$.
\begin{remark}
Straight forward computations shows that the first term of a deformation depends only on the cohomology class, that is two cohomologous cocycles corresponds to equivalent deformations.
\end{remark}

Define a $3$-cochain $G_s= \sum_{\stackrel{i+j=s}{ i,j>0}} [F_i, F_j]_{\mathcal{N}}$ for all $s\geq 2$. This cochain is called the obstruction cochain for extending a given deformation truncated at $(s-1)$th stage to the $s$th stage.

\begin{proposition}
The obstruction cochain $G_s$ is a $3$-cocycle, and the vanishing of the cohomology class determined by $G_s$ is a necessary and sufficient condition for the deformation to be extendable.
\end{proposition}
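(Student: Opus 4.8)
The plan is to read the system \eqref{def} as a Maurer--Cartan equation in the graded Lie algebra $(\Gamma^*(L),[\cdot,\cdot]_{\mathcal{N}},\delta)$ constructed in the previous section, and then to run the classical Gerstenhaber obstruction argument. First I would record, exactly as in the rewriting of \eqref{def} that precedes the statement, that a family $\{F_i\}_{i\geq 0}$ (with $F_0$ the bracket of $L$) satisfies the deformation equations if and only if $\delta^2(F_1)=0$ together with
$$
\delta^2(F_s)=\tfrac12\sum_{\substack{i+j=s\\ i,j>0}}[F_i,F_j]_{\mathcal{N}}\qquad(s\geq 2).
$$
Assuming a deformation truncated at the $(s-1)$th stage then means precisely that this relation holds for every index strictly less than $s$, and the obstruction $G_s=\sum_{i+j=s,\,i,j>0}[F_i,F_j]_{\mathcal{N}}$ is exactly the coefficient of $t^s$ that a new term $\delta^2(F_s)$ would have to absorb in order to push the deformation one step further.

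To prove that $G_s$ is a $3$-cocycle, i.e.\ $\delta^3(G_s)=0$, I would differentiate term by term and use the graded Leibniz rule for $\delta$ with respect to $[\cdot,\cdot]_{\mathcal{N}}$ (the unnumbered Proposition just before the statement), which for the degree-one cochains $F_i,F_j$ gives
$$
\delta^3(G_s)=\sum_{\substack{i+j=s\\ i,j>0}}\bigl([\delta^2 F_i,F_j]_{\mathcal{N}}+[F_i,\delta^2 F_j]_{\mathcal{N}}\bigr).
$$
Since every index occurring here is strictly smaller than $s$, I may substitute the truncated relations $\delta^2 F_i=\tfrac12\sum_{k+l=i,\,k,l>0}[F_k,F_l]_{\mathcal{N}}$ (and likewise for $\delta^2 F_j$), turning $\delta^3(G_s)$ into a sum of double brackets $[[F_a,F_b]_{\mathcal{N}},F_c]_{\mathcal{N}}$ indexed by ordered triples $(a,b,c)$ of positive integers with $a+b+c=s$. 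The crux is then to regroup these triples so that the expression becomes a sum of graded Jacobi identities for $[\cdot,\cdot]_{\mathcal{N}}$, established in Proposition \ref{Jacobi}; combined with the graded antisymmetry of $[\cdot,\cdot]_{\mathcal{N}}$ on the $F_i$, all contributions cancel and $\delta^3(G_s)=0$.

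For the extendability criterion I would once more equate the coefficients of $t^s$ on the two sides of \eqref{def}: a cochain $F_s\in C^2(L,L)$ extending the truncated deformation to the $s$th stage exists if and only if the equation $\delta^2(F_s)=\tfrac12\,G_s$ is solvable. Because $\tfrac12$ is invertible in characteristic zero and $\delta^2$ is linear, solvability is equivalent to $G_s$ lying in the image of $\delta^2$, that is, to $G_s$ being a $3$-coboundary. As $G_s$ has already been shown to be a cocycle, this is precisely the vanishing of its class $[G_s]\in H^3(L,L)$, giving both the necessity and the sufficiency asserted in the statement.

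The step I expect to be the main obstacle is the verification that $\delta^3(G_s)=0$: organising the double brackets indexed by the triples $(a,b,c)$ into complete graded Jacobi expressions requires careful bookkeeping of the signs produced by the Leibniz rule $\delta[f,g]_{\mathcal{N}}=(-1)^{q+1}[\delta f,g]_{\mathcal{N}}+[f,\delta g]_{\mathcal{N}}$ and by graded antisymmetry, and it is exactly here that Proposition \ref{Jacobi} is indispensable. Once the cocycle property is in place, the Maurer--Cartan reformulation of the first paragraph and the coboundary characterisation of extendability in the third are essentially formal.
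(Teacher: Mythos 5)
Your proposal is correct and takes essentially the same route as the paper: the paper likewise runs the Gerstenhaber obstruction argument in the graded Lie algebra $(\Gamma^*(L),[\cdot,\cdot]_{\mathcal{N}})$, expanding $\delta G_s=-[G_s,\pi]_{\mathcal{N}}$ directly, substituting the truncated deformation equations, and cancelling the resulting triple sum $\sum_{k+l+j=s}[[F_k,F_l]_{\mathcal{N}},F_j]_{\mathcal{N}}$ by the graded Jacobi identity of Proposition \ref{Jacobi} --- which is exactly your computation via the rule $\delta[f,g]_{\mathcal{N}}=(-1)^{q+1}[\delta f,g]_{\mathcal{N}}+[f,\delta g]_{\mathcal{N}}$, unpacked one step, since that rule is itself deduced from Propositions \ref{pipi}, \ref{pi} and \ref{Jacobi}. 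Your extendability argument (solvability of $\delta^2(F_s)=\tfrac12 G_s$, with $\tfrac12$ harmless in characteristic zero, hence $[G_s]=0$ in $H^3(L,L)$) also matches the paper's closing observation that $G_s=\delta\phi$ yields the extension $f_t+\phi t^{s}$.
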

{\bf Proof.} Let $f_t=\sum_ {i=1}^{s-1} F_i t^i$ be a truncated (at $(s-1)$th stage) deformation.    
By Proposition\,\ref{pipi}, we have,
$$\begin{array}{ll}
\delta (G_s)&=-[G_s, \pi]
=-[\sum_{\stackrel{i+j=s}{ i,j>0}} [F_i, F_j], \pi]
=-\sum_{\stackrel{i+j=s}{ i,j>0}}[[F_i, F_j], \pi]\\
& =\sum_{\stackrel{i+j=s}{ i,j>0}}\{[[F_j, \pi], F_i] +[[\pi, F_i], F_j]\} \hspace{50pt}(\mbox{by graded Jacobi identity})\\
&=2 \sum_{\stackrel{i+j=s}{ i,j>0}} [[F_i, \pi], F_j] \hspace{50pt} \mbox{(As $[F_i, \pi]= [\pi, F_i]$, both being $2$-cochains)}\\
&= 2  \sum_{\stackrel{i+j=s}{ i,j>0}} [\sum_{\stackrel{k+l=i}{k, l>0}}[F_k, F_l], F_j]\\
&=2  \sum_{\stackrel{i+j=s}{ i,j>0}} \sum_{\stackrel{k+l=i}{k, l>0}} [[F_k, F_l], F_j]\\
&= 2 \sum_{\stackrel{k+l+j=s}{k, l, j>0}} [[F_k, F_l], F_j]\\
&=0, \hspace{100pt} \mbox{(by graded Jacobi identity)}.
\end{array}
$$ It follows easily that if $G_s= \delta \phi_s$ for some $2$-cochain $\phi$, then  $f_t+ \phi t^{s}$ is a deformation truncated at $s$th stage, and is an extension of $f_t$.

\end{document}